\numberwithin{equation}{section}
\newcommand{\eps}{\varepsilon}
\newtheorem{prop}{Proposition}[section]
\newtheorem{assumption}[prop]{Assumption}
\newtheorem{thm}[prop]{Theorem}
\newtheorem{lem}[prop]{Lemma}
\newtheorem{cor}[prop]{Corollary}
\newtheorem{cla}[prop]{Claim}
\newtheorem{rem}[prop]{Remark}
\newenvironment{remark}{\begin{rem}\normalfont}{\end{rem}}
\newtheorem{example}[prop]{Example}
\newcommand{\hathat}[1]{%
	\begingroup%
	\let\macc@kerna\z@%
	\let\macc@kernb\z@%
	\let\macc@nucleus\@empty%
	\hat{\mathchoice%
		{\raisebox{.3ex}{\vphantom{\ensuremath{\displaystyle #1}}}}%
		{\raisebox{.3ex}{\vphantom{\ensuremath{\textstyle #1}}}}%
		{\raisebox{.16ex}{\vphantom{\ensuremath{\scriptstyle #1}}}}%
		{\raisebox{.14ex}{\vphantom{\ensuremath{\scriptscriptstyle #1}}}}%
		\smash{\hat{#1}}}%
	\endgroup%
}
\title{TASEP with a general initial condition and a deterministically moving wall}
\author{Sabrina Gernholt\thanks{Institute for Applied Mathematics, Bonn University, Endenicher Allee 60, 53115 Bonn, Germany. Email: {\tt sgernhol@uni-bonn.de}}
}
\date{}
\begin{document}
\maketitle

\begin{abstract}
We study the totally asymmetric simple exclusion process (TASEP) on $\mathbb{Z}$ with a general initial condition and a deterministically moving wall in front of the particles. Using colour-position symmetry, we express the one-point distributions in terms of particle positions in a TASEP with step initial condition along a space-like path. 

Based on this formula, we analyse the large-time asymptotics of the model under various scenarios. For initial conditions other than the step initial condition, we identify a distinct asymptotic behaviour at the boundary of the region influenced by the wall, differing from the observations made in \cite{BBF21,FG24}.
Furthermore, we demonstrate that product limit distributions are associated with shocks in the macroscopic empirical density.

As a special case of our starting formula, we derive a variational expression for the one-point distributions of TASEP with arbitrary initial data. Focusing on non-random initial conditions, such as periodic ones with an arbitrary density, we leverage our analytical tools to characterise the limit distribution within the framework of particle positions.
\end{abstract}

\section{Introduction} 

The totally asymmetric simple exclusion process (TASEP) was first introduced by Spitzer in 1970 \cite{Spi70} and is one of the most extensively studied interacting particle systems. We consider a single-species, continuous-time TASEP on the integer lattice $\mathbb{Z}$. This model consists of particles positioned on the lattice, each attempting to jump one step to the right after an exponential waiting time with rate one. The waiting times for each particle are independent of one another. Each site can host at most one particle at a time, and a jump is successful only if the target site is empty. 

We are interested in the large-time fluctuations of particle positions in a TASEP with a general initial condition and a rightmost particle that is blocked by a moving wall. Specifically, the wall moves to the right deterministically in front of the system of particles, preventing any jumps across it. For the particular case of the step initial condition, where all sites $...,-3,-2,-1,0$ are occupied by particles and all sites $1,2,3, \dots$ are empty, this setting has been studied by Borodin-Bufetov-Ferrari in \cite{BBF21} and by Ferrari-Gernholt in \cite{FG24}. 

With its interpretation as a random growth model, TASEP is one of the best-understood representatives of the KPZ universality class \cite{KPZ86} in $1+1$ dimensions. It exhibits non-trivial asymptotic fluctuations in the $1:2:3$ scaling limit, meaning that for large times $t$, the spatial correlations are of order $t^{2/3}$ and the fluctuations are of order $t^{1/3}$. 
For a variety of initial conditions, its spatial scaling limit has been identified and defined as the KPZ fixed point \cite{MQR17}. When including the temporal component, the overarching universal scaling limit is the directed landscape \cite{DOV22,DV21}. 

By the convergence to the KPZ fixed point, Matetski-Quastel-Remenik \cite{MQR17} provide a variational expression for the one-point limit distributions of TASEP. See also \cite{CLW16} for the one-point distributions of the discrete-time parallel update TASEP and \cite{BL13,CFS16,FO17,Jo03b,QR13b,QR13,QR16} for other instances of such formulas. 
For TASEP with step initial condition and a deterministic wall, the one-point limit distribution has the same form. In Theorem~4.4 of \cite{BBF21}, it is expressed in terms of a variational process, under the assumption that the wall influences the fluctuations of the tagged particle in only one macroscopic time region. Theorem~2.5 of \cite{FG24} extends this result to multiple time regions, where the asymptotic distribution becomes a product of variational formulas. 

\paragraph{New results on TASEP with a wall.}
The main goal of this work is to derive the large-time one-point distributions of TASEP with a wall under generic initial conditions, in terms of variational expressions. 
To achieve this, we establish a novel finite-time identity that expresses the one-point distributions of the process in terms of particle positions in a TASEP with step initial condition along a space-like path. As detailed in Theorem~\ref{th:finite_time_identity_non_random_IC}, we obtain
\begin{equation} \label{eq:introduction_finite_time}
\begin{aligned} 
\mathbb{P} (x^f_n(T) > s) = \mathbb{P} ( &x_n(t) > s - f(T-t) \ \forall t \in [0,T], \\ & x_{n-j}(T) > s - x_{1+j}^f(0) \ \forall j \in \{0, \dots, n-1\}),
\end{aligned} 
\end{equation}
where $x^f_n(T)$ denotes the position of the $n$-th particle in a TASEP with wall $f$ at time $T$, and $(x(t),t\geq 0)$ represents the particle positions in a TASEP with step initial condition, without a wall restriction. The formula~\eqref{eq:introduction_finite_time} is valid for any deterministic or random initial data and serves as the foundation for a large-time analysis. 

In Theorem~\ref{th:2.1}, we observe that particles close enough to the wall exhibit the same asymptotic fluctuations as in the step initial condition case studied in \cite{BBF21,FG24}. In contrast, for particles far away from the wall, the asymptotic fluctuations depend on the specific initial condition. More precisely, for the leftmost particles affected by the wall, their law either takes a product form or interpolates between the scaling limit without wall and the one under wall influence. We refer to these two situations as \qq{decoupling} and \qq{interpolation}. The corresponding results are Theorem~\ref{th:2.2} and Theorem~\ref{th:2.3}. In both cases, the asymptotic laws are expressed by variational formulas such as 
\begin{equation} \label{eq:introduction_var_1}
\mathbb{P} \left ( \sup_{\tau \in \mathbb{R}} \{ \mathcal{A}_2(\tau)-g_1(\tau)\}<c_1 s\right) \times \mathbb{P} \left ( \sup_{\tau \in \mathbb{R}} \{ \mathcal{A}_2(\tau)-g_2(\tau)\}<c_2 s\right)
\end{equation}
or 
\begin{equation} \label{eq:introduction_var_2}
\mathbb{P} \left ( \sup_{\tau \geq 0} \{ \mathcal{A}_2(\tau)-g_1(\tau)\}<c_1 s, \ \sup_{\tau \leq 0} \{ \mathcal{A}_2(\tau)-g_2(\tau)\}<c_2 s\right),
\end{equation}
where $\mathcal{A}_2$ denotes an Airy$_2$ process, $c_1,c_2 > 0$ are constants, and $g_1, g_2$ are piecewise continuous càdlàg functions with at least quadratic growth. One of these functions is determined by the wall constraint, while the other is shaped by the initial condition.

We focus on deterministic initial data, and our results apply to all non-random initial conditions that locally, within a $(t^{1/3} \times t^{2/3})$-window, fluctuate around a half-$d$-periodic initial condition with $d \in \mathbb{R}_{\geq 1}$. Theorem~\ref{th:2.1} and Theorem~\ref{th:2.2} require even fewer assumptions. Furthermore, Section~\ref{sect:main_results_asymptotics_random} outlines how our results transfer to random initial conditions. 

Our analysis requires the weak convergence of the particle process with varying labels at a fixed time in a TASEP with step initial condition to the Airy$_2$ process, on the fluctuation scale. In the framework of particle positions, this result was not available, and we establish it in Lemma~\ref{lem:weak_conv}.

In Section~\ref{sect:main_shocks}, we show that the product limit distributions appearing in the decoupling case~\eqref{eq:introduction_var_1} in this paper and in \cite{FG24} for multiple time regions are related to shocks, that is, discontinuities in the macroscopic density of the process. This connection is well-known for shocks generated purely by initial conditions in related models without a wall, as discussed in \cite{BF22, Fer18, FN13, FN16, FN19, FN24, N17, Nej21, QR18}. For TASEP with a wall, it has not yet been explored.

\paragraph{Key techniques and related work.}
The identity~\eqref{eq:introduction_finite_time} from Theorem~\ref{th:finite_time_identity_non_random_IC} is the starting point of our analysis. Previously, the formula was established in the specific case of the step initial condition in Proposition~3.1 of \cite{BBF21}, where it reduces to an expression in terms of a tagged particle process. 
Both formulas are derived by coupling the TASEP with a wall to a multi-species TASEP and applying the colour-position symmetry. This symmetry originates in \cite{AHR09} and has been examined in \cite{AAV11,BB19,BW18,Buf20,Gal20}. In comparison to the step initial condition case, the proof of Theorem~\ref{th:finite_time_identity_non_random_IC} is much more involved because the general initial data must be taken into account.

Based on Theorem~\ref{th:finite_time_identity_non_random_IC}, we analyse the large-time fluctuations of the tagged particle under wall influence in the three aforementioned situations by employing several properties of the TASEP with step initial condition. We now discuss the ingredients required when considering the leftmost particles affected by the wall.

In the decoupling case, we recall the concept of backwards paths developed in \cite{Fer18} and \cite{BBF21,FG24,FN19}. As elaborated in \cite{FG24}, combined with a slow decorrelation result \cite{Fer08}, the localisation of these paths is a powerful tool to achieve the asymptotic independence of particle fluctuations in TASEP. We refer to \cite{FG24} for the details, as our arguments are based on those presented there.  

In the interpolation case, we rely on the convergence of the rescaled particle positions in a TASEP with step initial condition along a space-like path to the Airy$_2$ process. For finite-dimensional distributions, this convergence has been shown in the more general context of PushASEP in \cite{BF07}. See also \cite{BFS07b} for corresponding results for the PNG model and the discrete TASEP with parallel update. In \cite{CFP10c,CFP10b,Fer08}, it is explained how slow decorrelation can be used to generalise convergence results to all space-time paths except for characteristic lines. Further, \cite{DOV22} observes that general convergence along space-like paths is also a consequence of the convergence to the directed landscape. 

We work within the framework of particle positions, as our finite-time identity is naturally expressed in this setting, and we already have the results of \cite{BBF21,FG24} at hand. We combine the results of \cite{BF07} with weak convergence results for (a) the tagged particle process and (b) the process of particles with varying labels at a fixed time in a TASEP with step initial condition. The tagged particle process has also been addressed in \cite{SI07}, and weak convergence is established in Proposition~2.9 of \cite{BBF21}. In the case of the step initial condition, the identity~\eqref{eq:introduction_finite_time} reduces to an expression involving only the tagged particle process. For this reason, \cite{BBF21,FG24} analysed (a) but not (b). To fill this gap, in Lemma~\ref{lem:weak_conv}, we confirm weak convergence for the fixed time process as well. Similar results were previously proven for related models in \cite{FO17,Jo03b}.  

The key to weak convergence is the tightness of the processes. For both (a) and (b), this is obtained by a comparison to a stationary TASEP in the manner of \cite{CP15b}, see also \cite{BF22,FO17}. The comparison inequalities are given in \cite{FG24}. In view of our variational starting formula, they are essential for narrowing down the subset of events that contribute non-trivially to the limiting probability. It is therefore natural to invoke the comparison inequalities in the proof of Lemma~\ref{lem:weak_conv} as well, rather than deriving it from known weak convergence results through slow decorrelation. 

\paragraph{Related results without a wall.}
A noteworthy by-product of our work is a variational formula for the limiting one-point distributions of a TASEP with one of the above initial conditions also in the absence of a wall. 

As a direct consequence of the identity~\eqref{eq:introduction_finite_time}, we obtain a variational expression for the one-point distributions in terms of a TASEP with step initial condition. We also provide a short self-contained proof utilising the coupling introduced in \cite{Sep98c} and colour-position symmetry. Regarding related models, these arguments play a similar role as the symmetry in LPP models, see \cite{CLW16,FO17,Jo03b}, and the skew-time reversibility of TASEP height functions, see \cite{MQR17}.

The key components for the large-time asymptotics are the weak convergence result in Lemma~\ref{lem:weak_conv} and again the comparison to a stationary TASEP. We present the limiting result in Proposition~\ref{pro:limit_thm_2.2} and extend it to initial conditions with particles distributed across the entire lattice in Corollary~\ref{cor:prop_2.2}. These results apply, for example, to the TASEP with (half-)$d$-periodic initial condition, where $d \in \mathbb{R}_{\geq 1}$. In this regard, previous findings include \cite{BFP06} with $d \in \mathbb{N}$ for the discrete TASEP, \cite{BFPS06,BFS07} for the TASEP with (half)-flat initial condition, and \cite{FO17} for the LPP setting and general densities. In Section~\ref{sect:main_results_asymptotics_random}, we explain how Proposition~\ref{pro:limit_thm_2.2} and Corollary~\ref{cor:prop_2.2} can be extended to random initial data such as Bernoulli initial conditions, providing a basis to confirm the asymptotic results of \cite{BFP09,CFS16,CLW16}.

\paragraph{Outline.} We present our main results in Section~\ref{sect:main_results}. As a special case, Section~\ref{sect:half_d_periodic_TASEP} discusses the large-time behaviour of a TASEP with half-$d$-periodic initial condition and a wall. The remainder of our work focuses on the proofs of the results from Section~\ref{sect:main_results}. In Section~\ref{sect:proof_finite_time_identity}, we establish the variational starting formula, while Section~\ref{sect:proofs_large_time} contains the proofs of our large-time results. Section~\ref{sect:proof_density_shock} explores the relationship between product limit distributions and shocks in the macroscopic empirical density. Finally, in Section~\ref{sect:auxiliary results}, we provide the weak convergence result and apply the comparison to a stationary TASEP. Appendix~\ref{appendix} includes technical computations and known one-point estimates for the TASEP with step initial condition. 

\paragraph{Acknowledgements.} I would like to thank P.L. Ferrari for insightful discussions and valuable feedback. I am also grateful to A. Borodin for his helpful comments on an earlier version of this work. Finally, I thank the two anonymous referees for their careful reading and constructive suggestions. This work was partially supported by the Deutsche Forschungsgemeinschaft (DFG, German Research Foundation) through the Collaborative Research Centre \qq{The Mathematics of Emergent Effects} (CRC 1060, project-ID 211504053) and the Bonn International Graduate School of Mathematics at the Hausdorff Center for Mathematics (EXC 2047/1, project-ID 390685813).

\section{Main results} \label{sect:main_results} 

\subsection{Finite-time identity for TASEP with general initial condition and wall} \label{sect:main_results_finite_time_identity} 

Our first result is a finite-time identity for the one-point distributions of a TASEP with a general initial condition and a deterministic wall constraint. 

For this purpose, let $f:\mathbb{R}_{\geq 0} \to \mathbb{R}_{\geq 0}$ be a non-decreasing càdlàg function with $f(0) = 0$. In the following, all TASEPs are described via their associated particle position processes. We denote by $(x^f(t),t\geq 0)$ the particle positions in a TASEP with a (random or deterministic) initial condition \begin{equation}
x^f_n(0) = x_n^f, \ n \in \mathbb{N}
\end{equation}
and the wall constraint 
\begin{equation}
x^f_1(t) \leq f(t) \text{ for all } t \geq 0.
\end{equation}
As usual, $x_n^f(t)$ is the position of the $n$-th particle in the process at time $t$. As a convention, the labels of particles are increasing to the left, so that $ \cdots< x_{n+1}^f(t) < x_n^f(t) < \cdots$. In this setting, the initial condition contains a rightmost particle at the position $x_1^f$ with $x_1^f \leq 0$. 

Further, we let $(x(t),t\geq 0)$ be a TASEP with step initial condition and without wall restrictions, which is independent of $\{x_n^f, n \in \mathbb{N}\}$. Below, we will consider several TASEPs with step initial condition. Unless stated otherwise, it is always centred at the origin, meaning that $x_n(0)=-(n-1), n \in \mathbb{N}$. Our observation time variable is $T \geq 0$. 

\begin{thm} \label{th:finite_time_identity_non_random_IC} 
	For any $n \in \mathbb{N}$, it holds
	\begin{equation} \label{eq:thm_finite_time_identity_non_random_IC} \begin{aligned} 
	&\mathbb{P} (x^f_n(T) > s) \\  &= \mathbb{P} ( x_n(t) > s - f(T-t) \ \forall t \in [0,T], \ x_{n-j}(T) > s - x_{1+j}^f \ \forall j \in [n-1]),
	\end{aligned} \end{equation}
	where $[n-1] = \{0, \dots, n-1\}$.
\end{thm}

Theorem~\ref{th:finite_time_identity_non_random_IC}, proven in Section~\ref{sect:proof_finite_time_identity}, generalises the identity for step initial condition obtained in Proposition~3.1 of \cite{BBF21}. Like them, we couple the process with wall constraint to a multi-species TASEP and apply the colour-position symmetry. However, for a general initial condition, the multi-species TASEP now starts from some permutation other than the identity. The proof is more involved since we need to take care of the inverse permutation that is applied to a second multi-species TASEP after its evolution. 
The finite-time identity \eqref{eq:thm_finite_time_identity_non_random_IC} holds for each integer $s$. For the large-time analysis, we allow $s \in \mathbb{R}$, as constant shifts become negligible. 

\begin{remark} \label{re:1} For a TASEP $(\tilde{x}(t),t\geq 0)$ with the same initial condition, $\tilde{x}_n(0) = x_n^f$ for $n \in \mathbb{N}$, and without a wall constraint, Theorem~\ref{th:finite_time_identity_non_random_IC}\footnote{In the proof of Theorem~\ref{th:finite_time_identity_non_random_IC}, we can choose $f \equiv + \infty$.} implies that its one-point distributions are related to a TASEP with step initial condition: 
	\begin{equation} \label{eq:remark_1} 
	\mathbb{P}(\tilde{x}_n(T) > s) = \mathbb{P} \left ( \min_{j \in [n-1]} \{ x_{n-j}(T)+x_{1+j}^f \} > s \right).
	\end{equation}
	Using known convergence results for $(x(t), t \geq 0)$, the asymptotic one-point distributions can be expressed as variational formulas of the Airy$_2$ process, see Proposition~\ref{pro:limit_thm_2.2} and Corollary~\ref{cor:prop_2.2}. This eliminates the need to represent the one-point distribution as a Fredholm determinant and to perform asymptotic analysis on its kernel, as done in earlier works like \cite{BFS07} on the half-flat initial condition. While their arguments apply to joint distributions and densities $\tfrac{1}{d}$ with $d \in \mathbb{N}$, our formula restricts to one-point distributions, but works for $d \in \mathbb{R}_{\geq 1}$. Still, also this approach involves the use of technical tools, such as a comparison to a stationary TASEP \cite{FG24}. 
	
	With Lemma~\ref{lem:colour_position_symmetry_step}, we provide a self-contained proof of \eqref{eq:remark_1}, as the technical details from the proof of Theorem~\ref{th:finite_time_identity_non_random_IC} are not required here. In particular, we do not need the existence of a rightmost particle. For a general initial condition $\{x_n^f, n \in \mathbb{Z}\}$ instead of $\{x_n^f, n \in \mathbb{N} \}$, the minimum in \eqref{eq:remark_1} is taken over $j \leq n-1$. The key idea for \eqref{eq:remark_1} is that the colour-position symmetry allows to replace the countably many TASEPs with shifted step initial conditions in the well-known coupling of \cite{Sep98c}, see \eqref{eq:lemma_2.1_sep98}, by a single TASEP with step initial condition. 
\end{remark}  

\subsection{Large-time asymptotics for TASEP with non-random initial condition and wall} \label{sect:main_results_asymptotics} 

In this section, we apply Theorem~\ref{th:finite_time_identity_non_random_IC} to determine the asymptotic fluctuations in a TASEP with a general non-random initial condition and a deterministic wall constraint.

We consider a TASEP $(x^f(t),t\geq 0)$ with a non-random initial condition $\{x_n^f, \ n \in \mathbb{N} \}$ and a wall constraint as in Section~\ref{sect:main_results_finite_time_identity}. The TASEP with the same initial condition but without the wall constraint is denoted by $(\tilde{x}(t),t\geq 0)$. As before, $(x(t),t \geq 0)$ is a TASEP with step initial condition. From now on, we suppose that the decay of $x_n^f$ in $n$ is bounded linearly, meaning 
\begin{equation} x_n^f \geq - d n  -\text{const.} \label{eq:linear_decay_IC} \end{equation} 
for some $d \in \mathbb{R}_{\geq 1}$. The half-$d$-periodic initial condition $x_n^f = -\lfloor d (n-1) \rfloor$ is discussed in Section~\ref{sect:half_d_periodic_TASEP} as a special case.

In \eqref{eq:thm_finite_time_identity_non_random_IC}, we choose $n=\alpha T$ with $\alpha \in (0,1)$ and set $s = \xi T - S T^{1/3}$ with $S \in \mathbb{R}$ and $\xi$ such that $x^f_{\alpha T}(T) \simeq \xi T$. The interesting case is when the $\alpha T$-th particle experiences a significant wall influence until time $T$. Naturally, the parameter $\xi$ depends on the function $f$ and on the initial condition of the process.

We denote the macroscopic scaling of the $\alpha T$-th particle in the original process by $g_\alpha T$, meaning $\tilde{x}_{\alpha T}(T) \simeq g_\alpha T$.

We wish to highlight that \eqref{eq:linear_decay_IC} is a natural restriction, as we aim to observe a wall influence.
If $x_n^f$ decays faster than linearly in $n$, then the macroscopic position of $x^f_{\alpha T}(T)$ is bounded by $x^f_{\alpha T}+T \ll -\alpha T$ for large times. Then, $x_{\alpha T}(t) + f(T-t)$ is strictly larger than the scaling of $x^f_{\alpha T}(T)$ for all times $t \in [0,T]$. Consequently, taking the formula~\eqref{eq:thm_finite_time_identity_non_random_IC} into account, there is no observable wall influence.

The constant in \eqref{eq:linear_decay_IC} could also be replaced by a term in $o(T)$. 

Three different situations arise; see Figure~\ref{fi:example_limit_distr_periodic} for an illustration.

\paragraph{Situation 1: $\xi < g_\alpha$.}
If the scaling of the tagged particle affected by the wall is macroscopically smaller than its scaling in case of no wall influence, the events at time $T$ in \eqref{eq:thm_finite_time_identity_non_random_IC} hold with probability converging to $1$.

\begin{thm} \label{th:2.1} 
	Suppose $\xi < g_\alpha$. Then, it holds
	\begin{equation} \label{eq:situation_1} \begin{aligned} 
	& \lim_{T \to \infty} \mathbb{P}(x^f_{\alpha T}(T) > \xi T - S T^{1/3}) \\
	& = \lim_{T \to \infty} \mathbb{P}(x_{\alpha T}(t) > \xi T - f(T-t)-ST^{1/3} \ \forall t \in [0,T]).
	\end{aligned}
	\end{equation}
\end{thm}

Theorem~\ref{th:2.1}, proven in Section~\ref{sect:proof_asymptotics_interior}, holds true for any initial condition for which a weak law of large numbers is known for $\tilde{x}_{\alpha T}(T)$. Together with Proposition~3.1 of \cite{BBF21}, it implies that for any initial condition, in the interior of the region affected by the wall\footnote{Here, the \qq{interior} refers to the subinterval of $\alpha \in (0,1)$ such that the wall affects $x^f_{\alpha T}(T)$. Indeed, there is at most one $\alpha$ such that $x^f_{\alpha T}(T)$ is influenced by the wall and keeps its original scaling.} and for $\xi \in (-\alpha,1-2\sqrt{\alpha})$\footnote{If $\xi < - \alpha$, we cannot observe a wall influence. If $\xi = - \alpha$, we only obtain one if $f$ stays at the origin at least until time $(1-\alpha)T$. In a TASEP with step initial condition and with the same wall, the $\alpha T$-th particle does not move macroscopically in this case. We exclude this special case in the following, except for a comment on $f \equiv 0$ in Section~\ref{sect:half_d_periodic_TASEP}.}, we find the same convergence results like for a TASEP with step initial condition and with the same wall constraint. In this regard, we make the following assumptions: 

\begin{assumption} \label{assumpt:2.1.1} 
	Let $f$ be a non-decreasing càdlàg function on $\mathbb{R}_{\geq 0}$ with $f(0)=0$. Let $n \in \mathbb{N}_0$ and $\alpha_0 < \dots < \alpha_n \in (\alpha,1]$. We require: 
	\begin{itemize}
		\item[(a)] For some fixed $\eps \in (0,\alpha_0-\alpha)$ and for all $t \in [0,T]$ with $|t-\alpha_i T | > \eps T$ for $i = 0, \dots, n$, it holds 
		\begin{equation}
		f(T-t)\geq f_0((T-t)/T)T+K(\eps)T,
		\end{equation}
		where $K(\eps)$ is a positive constant independent of $t$ and the function $f_0:[0,1] \to \mathbb{R}$ is defined by 
		\begin{equation} \label{eq:fct_f0} 
		f_0(\beta) \coloneqq \begin{cases} \xi - \sqrt{1-\beta}(\sqrt{1-\beta}-2\sqrt{\alpha}), \ & \beta \in [0,1-\alpha), \\ 
		\xi+\alpha, \ & \beta \in [1-\alpha,1]. \end{cases} 
		\end{equation}
		\item[(b)] For $i \in \{0, \dots, n \}$, we define \[c_1^i  = \alpha^{-1/6} \alpha_i^{1/6} (\sqrt{\alpha_i}-\sqrt{\alpha})^{2/3}, \ c_2^i = 2 \alpha^{-1/3} \alpha_i^{5/6} (\sqrt{\alpha_i}-\sqrt{\alpha})^{1/3}\] \[ \text{and } \mu^i(\tau,T) = \sqrt{\alpha_i}(\sqrt{\alpha_i}-2\sqrt{\alpha})T-2\tau\alpha^{-1/3} \alpha_i^{1/3}(\sqrt{\alpha_i}-\sqrt{\alpha})^{4/3} T^{2/3}.\] We parametrize $T-t=(1-\alpha_i)T+c_2^i \tau T^{2/3}$ and let 
		\begin{equation} \label{eq:assumpt_2.1} 
		f(T-t) = \xi T - \mu^i(\tau,T) - c_1^i(\tau^2-g_T^i(\tau))T^{1/3}, \ \tau \in \mathbb{R}.
		\end{equation}
		The sequences $(g_T^i), i = 0,\dots,n $ converge uniformly on compact sets to piecewise continuous and càdlàg functions $g_i$. Further, there exists a constant $M \in \mathbb{R}$ such that for $T$ large enough, $g_T^i(\tau) \geq - M + \tfrac{1}{2} \tau^2$ for $|\tau| \leq \eps (c_2^i)^{-1} T^{1/3}$. 
	\end{itemize}
	If $\alpha_n = 1$, then we have $\xi = 1-2\sqrt{\alpha}$ and for $i=n$, we require Assumption (b) only for $\tau \geq 0$. 
\end{assumption}

As noted in \cite{BBF21,FG24}, Assumption~\ref{assumpt:2.1.1}(a) ensures that wall effects on the tagged particle arise only in the vicinity of the times $\alpha_i T$, for $i=0, \dots, n$. Consider the right hand side of equation \eqref{eq:situation_1}. For all times $t$ satisfying $|t-\alpha_i T| > \eps T$ for each $i=0, \dots, n$, the inequality $x_{\alpha T}(t) > \xi T - f(T-t)-ST^{1/3}$ holds with probability tending to $1$ as $T \to \infty$. In contrast, the sequences $(g_T^i)$ in Assumption~\ref{assumpt:2.1.1}(b) describe the $\mathcal{O}(T^{1/3})$-fluctuations of $\xi T - f(T-t)$ around the law of large numbers of $x_{\alpha T}(t)$ near the times $\alpha_i T$, $i=0, \dots, n$. Together with the parameters $\alpha$ and $\alpha_0, \dots, \alpha_n$, they determine the limit distribution. 

In Remark~2.6 of \cite{FG24}, it was pointed out that we can also allow limit functions $g_i$ that equal $+\infty$ in an interval. Given Theorem~\ref{th:2.1}, Theorem~4.4 of \cite{BBF21} as well as Theorem~2.5 and Remark~2.6 of \cite{FG24} imply 

\begin{cor} \label{cor:thm_2.1} 
	For $x^f_{\alpha T}(T) \simeq \xi T$ with $\xi \in (-\alpha,g_\alpha )$, for $f$ satisfying Assumption~\ref{assumpt:2.1.1} and for each $S \in \mathbb{R}$, it holds 
	\begin{equation} \begin{aligned} 
	&\lim_{T \to \infty} \mathbb{P}(x^f_{\alpha T}(T) > \xi T - S T^{1/3}) = \prod_{i=0}^n \mathbb{P} \Bigl ( \sup_{\tau \in \mathbb{R}} \{ \mathcal{A}_2(\tau)-g_i(\tau)\} < (c_1^i)^{-1} S \Bigr),
	\end{aligned} \end{equation}
	where $\mathcal{A}_2$ denotes an Airy$_2$ process. 
\end{cor} 

In the setting of Corollary~\ref{cor:thm_2.1}, we have $\alpha_n < 1$. Subsequently, we denote 
\begin{equation}
\Phi_n^{\alpha_0, \dots, \alpha_n}(S) = \prod_{i=0}^n \mathbb{P} \Bigl ( \sup_{\tau \in \mathbb{R}} \{ \mathcal{A}_2(\tau)-g_i(\tau)\}< (c_1^i)^{-1} S \Bigr).
\end{equation}
Note that $\Phi_n^{\alpha_0, \dots, \alpha_n}$ implicitly depends on the functions $g_0, \dots, g_n$ from Assumption~\ref{assumpt:2.1.1}.

While Situation~1 covers the limit distributions in the interior of the region affected by the wall, Situation~2 and Situation~3 deal with the limiting behaviour on its boundary\footnote{Here, we suppose that there exists some $\alpha$ such that the tagged particle is affected by the wall, but $x^f_{\alpha T}(T) \simeq \xi T$ is its original scaling. If this was not the case and the $\alpha T$-th particle was the leftmost particle under a wall influence, then in $\tilde{x}(T)$ there would be a gap between $\tilde{x}_{\alpha T}(T)$ and the particles to its left. Again, the fluctuations of $x^f_{\alpha T}(T)$ could be determined by \eqref{eq:situation_1}.}. Here, the asymptotic behaviour of the tagged particle's fluctuations depends on the initial condition. This makes sense because the limit distribution at the boundary should represent the transition between the limit distributions in the affected region and those of particles not influenced by the wall. 

\paragraph{Situation 2: $\xi = g_\alpha < 1-2\sqrt{\alpha}$. } 

If the tagged particle retains its original scaling and the latter is macroscopically smaller than the scaling in a TASEP with step initial condition, then we observe a decoupling in the limit distribution. Now, this is not due to several wall influences but to the initial condition of the process. 

\begin{thm} \label{th:2.2} 
	Suppose $\xi = g_\alpha < 1-2\sqrt{\alpha}$. Then, it holds
	\begin{equation}
	\begin{aligned}
	\lim_{T \to \infty} \mathbb{P}(x^f_{\alpha T}(T) > \xi T - S T^{1/3}) 
	&= \lim_{T \to \infty} \mathbb{P}(x_{\alpha T}(t) > \xi T - f(T-t)-ST^{1/3} \ \forall t \in [0,T]) \\
	&\hphantom{= } \times \lim_{T \to \infty} \mathbb{P}(\tilde{x}_{\alpha T}(T) > \xi T - S T^{1/3}). 
	\end{aligned} 
	\end{equation}
\end{thm} 

For $g_\alpha > -\alpha$, under Assumption~\ref{assumpt:2.1.1} the first limit on the right hand side again takes the form of $\Phi_n^{\alpha_0,\dots,\alpha_n}(S)$ with $\alpha_n < 1$. We prove Theorem~\ref{th:2.2} in Section~\ref{sect:proof_asymptotics_decoupling}. 

\paragraph{Situation 3: $\xi = g_\alpha = 1-2\sqrt{\alpha}$.} 

If the scaling of the tagged particle matches its original one in the absence of a wall and equals the scaling in a TASEP with step initial condition, then the labels $\alpha T-j$ for which the events at time $T$ in \eqref{eq:thm_finite_time_identity_non_random_IC} have a non-trivial limiting probability include labels in a neighbourhood of $\alpha T$. If we further have a wall influence around time $T$, then we need to determine the joint limit distribution of the tagged particle process $x_{\alpha T}(t)$ on a time interval \emph{and} of a family of particle positions at time $T$. This means that we consider space-like paths like in Figure~\ref{fi:space_like_path}.

We impose additional assumptions on the initial condition of our process, namely the existence of an asymptotic linear growth rate and a convergence under diffusive scaling: 

\begin{assumption} \label{assumpt:2.7} 
	There exists some $d \in [1, \tfrac{1}{\sqrt{\alpha}}]$ such that\footnote{For a half-$d$-periodic initial condition, $g_\alpha = 1-2\sqrt{\alpha}$ holds for $\alpha \leq d^{-2}$, see Section~\ref{sect:half_d_periodic_TASEP}.}
	\begin{equation} \label{eq:additional_assumpt_IC} 
	y_T(\tau) \coloneqq  \frac{x_{1+\hat{c}_2\tau T^{2/3}}^f + d \hat{c}_2\tau T^{2/3}}{c_1 T^{1/3}}, \ \ \tau \geq 0, 
	\end{equation}
	fulfils $y_T(\tau) \geq - \tfrac{1}{2} \tau^2$ for $\tau \in [0,\hat{c}_2^{-1} \alpha T^{1/3}]$ and for all $T$ large enough\footnote{Our arguments apply for $y_T(\tau) \geq - \tfrac{1}{2} \tau^2 - o(1)$ as well.}. The constants are given by $\hat{c}_2 = 2 \alpha^{2/3}(1-\sqrt{\alpha})^{1/3}$ and $c_1 = (1-\sqrt{\alpha})^{2/3} \alpha^{-1/6}$.
	If $d = \tfrac{1}{\sqrt{\alpha}}$, then $y_T$ additionally converges uniformly on compact sets to a piecewise continuous and càdlàg function $y$.
\end{assumption} 

\begin{thm} \label{th:2.3} 
	Suppose $\xi = g_\alpha = 1-2\sqrt{\alpha}$, $f$ satisfies Assumption~\ref{assumpt:2.1.1} with $\alpha_n = 1$ and $\{x_n^f, n \in \mathbb{N}\}$ fulfils Assumption~\ref{assumpt:2.7}. \\
	Case 1: if $d < \tfrac{1}{\sqrt{\alpha}}$, then it holds
	\begin{equation} \label{eq:thm_2.1.3_0}
	\begin{aligned}
	& \lim_{T \to \infty} \mathbb{P}(x^f_{\alpha T}(T) > (1-2\sqrt{\alpha})T - S T^{1/3}) \\
	&= \mathbb{P} \Bigl ( \sup_{\tau \geq 0} \{ \mathcal{A}_2(\tau)-g_n(\tau) \} < c_1^{-1} S \Bigr ) 
	\times  \Phi_{n-1}^{\alpha_0, \dots, \alpha_{n-1}}(S).
	\end{aligned} 
	\end{equation}
	Case 2: if $d = \tfrac{1}{\sqrt{\alpha}}$, then it holds
	\begin{equation} \label{eq:thm_2.1.3_1} 
	\begin{aligned}
	& \lim_{T \to \infty} \mathbb{P}(x^f_{\alpha T}(T) > (1-2\sqrt{\alpha})T - S T^{1/3}) \\ 
	&= \mathbb{P} \Bigl ( \sup_{\tau \geq 0} \{ \mathcal{A}_2(\tau)-g_n(\tau) \} < c_1^{-1} S, \ \sup_{\tau  \leq 0} \{ \mathcal{A}_2(\tau)-\tau^2 - y(-\tau) \} < c_1^{-1} 	S \Bigr ) \\ 
	& \hphantom{=} \times \Phi_{n-1}^{\alpha_0, \dots, \alpha_{n-1}}(S).
	\end{aligned} 
	\end{equation}
\end{thm}

\begin{remark}
	Since we demand $f(0) = 0$, $\xi = 1-2\sqrt{\alpha}$ implies $\alpha_n = 1$ if $f$ fulfils  Assumption~\ref{assumpt:2.1.1}. However, there might not be a wall influence at time $T$, in the sense that $g_n(\tau) = + \infty$ for $\tau > 0$. In this case, the first factor in \eqref{eq:thm_2.1.3_0} becomes $F_{\textup{GUE}}(c_1^{-1}S)$, where $F_{\textup{GUE}}$ denotes the GUE Tracy--Widom distribution function, and the first supremum in \eqref{eq:thm_2.1.3_1} vanishes, as $y(0) = 0$. 
\end{remark} 

In particular, for $x^f_{\alpha T}(T) \simeq (1-2\sqrt{\alpha})T$ and $d < \tfrac{1}{\sqrt{\alpha}}$, we obtain the same limit distribution as in a TASEP with step initial condition and wall constraint, see Theorem~4.7 of \cite{BBF21} and combine it with Theorem~2.5 of \cite{FG24}. 

Theorem~\ref{th:2.3} is proven in Section~\ref{sect:proof_asymptotics_interpolation} with help of some tools from Section~\ref{sect:auxiliary results}. Having the variational formula \eqref{eq:remark_1}, we can also apply them to compute the limit distribution
\begin{equation}
\lim_{T \to \infty} \mathbb{P}(\tilde{x}_{\alpha T}(T) > g_\alpha T - S T^{1/3})
\end{equation}
in the absence of a wall, given that we impose suitable assumptions on $\{x_n^f,n \in \mathbb{N}\}$. In addition to the situation of Assumption~\ref{assumpt:2.7}, we are interested in values $d > \tfrac{1}{\sqrt{\alpha}}$ because they are related to Theorem~\ref{th:2.2}. 

\begin{assumption} \label{assumpt:2.9} 
	There exists some $d > \tfrac{1}{\sqrt{\alpha}}$ such that 
	\begin{equation}
	y_T(\tau) \coloneqq \frac{x^f_{(\alpha-d^{-2})T + \hat{c}_2 \tau T^{2/3}} + d((\alpha-d^{-2})T+\hat{c}_2 \tau T^{2/3})}{c_1 T^{1/3}}, \ \ \tau \in \mathbb{R}, 
	\end{equation}
	fulfils $y_T(\tau) \geq - 2 (1+\sqrt{\alpha}d)^{-2} \tau^2$ for $\tau \in [-\hat{c}_2^{-1} (\alpha-d^{-2}) T^{1/3}, \hat{c}_2^{-1} d^{-2} T^{1/3}]$ and for all $T$ large enough. The constants are $\hat{c}_2 = 2d^{-4/3}(1-d^{-1})^{1/3}$ and $c_1 = (1-d^{-1})^{2/3} d^{1/3}$. Further, $y_T$ converges uniformly on compact sets to a piecewise continuous and càdlàg function $y$. 
\end{assumption} 

\begin{prop} \label{pro:limit_thm_2.2} 
	Recall $\tilde{x}_n(0)=x_n^f$ and suppose $\{x_n^f, n \in \mathbb{N}\}$ fulfils Assumption~\ref{assumpt:2.9}. Then, it holds 
	\begin{equation}
	\begin{aligned} 
	\lim_{T \to \infty} \mathbb{P}(\tilde{x}_{\alpha T}(T) > g_\alpha T - S T^{1/3})
	&= \mathbb{P} \left( \sup_{\tau \in \mathbb{R}} \{ \mathcal{A}_2(\tau) - \tau^2 - y(-\tau) \} < c_1^{-1} S \right)
	\end{aligned} \label{eq:prop_limit_thm_2.2_0} 
	\end{equation}
	with $g_\alpha = 1-d^{-1}-d\alpha$ and $c_1 = (1-d^{-1})^{2/3} d^{1/3}$. 
	
	Suppose $\{x_n^f,n \in \mathbb{N}\}$ fulfils Assumption~\ref{assumpt:2.7}. Then, we have $g_\alpha = 1-2\sqrt{\alpha}$. If $d = \tfrac{1}{\sqrt{\alpha}}$, then it holds
	\begin{equation}
	\begin{aligned} 
	\lim_{T \to \infty} \mathbb{P}(\tilde{x}_{\alpha T}(T) > g_\alpha T - S T^{1/3})
	&= \mathbb{P} \left( \sup_{\tau \leq 0} \{ \mathcal{A}_2(\tau) - \tau^2 - y(-\tau) \} < c_1^{-1} S \right).
	\end{aligned}  \label{eq:prop_limit_thm_2.2_1} 
	\end{equation}
	If $d < \tfrac{1}{\sqrt{\alpha}}$, then it holds
	\begin{equation}
	\lim_{T \to \infty} \mathbb{P}(\tilde{x}_{\alpha T}(T) > g_\alpha T - S T^{1/3}) = F_{\textup{GUE}}(c_1^{-1} S).  \label{eq:prop_limit_thm_2.2_2}  
	\end{equation}
	In both cases, we have $c_1 = (1-\sqrt{\alpha})^{2/3} \alpha^{-1/6}$. 
\end{prop} 

The proof of Proposition~\ref{pro:limit_thm_2.2} can be found in Section~\ref{sect:auxiliary results}.

\begin{remark}
	Proposition~\ref{pro:limit_thm_2.2} does not take initial conditions into account that create a shock in the macroscopic density. Further, as it is the case throughout this work, we restrict ourselves to initial conditions with a rightmost particle. But of course, we expect the limit shape \eqref{eq:prop_limit_thm_2.2_0} also for initial conditions with particles on the whole lattice if they are approximately $d$-periodic with $d > 1$. Choosing the label of the tagged particle large enough, its asymptotic fluctuations in the half-periodic case have the same law like in the fully periodic case, recall \cite{BFS07}. We formulate a convergence result for initial conditions on the whole lattice in Corollary~\ref{cor:prop_2.2}, in accordance with \eqref{eq:prop_limit_thm_2.2_0}. It can be translated to other tagged particles by shifting the labels.
\end{remark} 

\begin{cor} \label{cor:prop_2.2} 
	Consider a TASEP $(\tilde{x}(t),t\geq 0)$ with a deterministic initial condition $\{x_n^f, n \in \mathbb{Z} \}$ with $x_n^f \leq 0$ for $n \in \mathbb{N}$ and $x_n^f > 0$ for $n \in \mathbb{Z}_{\leq 0}$. Let $d > 1$, define $\hat{c}_2$ and $c_1$ as in Assumption~\ref{assumpt:2.9} and suppose that
	\begin{equation} \label{eq:cor_prop_2.2_y} 
	y_T(\tau) \coloneqq \frac{x^f_{\hat{c}_2 \tau T^{2/3}}+d\hat{c}_2 \tau T^{2/3}}{c_1 T^{1/3}}, \ \ \tau \in \mathbb{R}, 
	\end{equation}
	fulfils $y_T(\tau) \geq - 2 (1+d)^{-2} \tau^2$ for $\tau \in [-\hat{c}_2^{-1} (1-d^{-2})T^{1/3}, \hat{c}_2^{-1} d^{-2} T^{1/3}]$ and for all $T$ large enough. Further, suppose that $y_T$ converges uniformly on compact sets to a piecewise continuous and càdlàg function $y$. Then, it holds 
	\begin{equation}
	\begin{aligned} 
	\lim_{T \to \infty} \mathbb{P}(\tilde{x}_{d^{-2} T}(T) > (1-2d^{-1}) T - S T^{1/3})
	&= \mathbb{P}\left( \sup_{\tau \in \mathbb{R}} \{ \mathcal{A}_2(\tau) - \tau^2 - y(-\tau) \} < c_1^{-1} S \right).
	\end{aligned} \label{eq:cor_prop_2.2} 
	\end{equation} 
\end{cor} 

We provide a short proof of Corollary~\ref{cor:prop_2.2} in Section~\ref{sect:auxiliary results}.

\subsection{Large-time asymptotics for TASEP with random initial condition and wall}  \label{sect:main_results_asymptotics_random}

In this paper, we primarily focus on non-random initial conditions. Nevertheless, the finite-time identity from Theorem~\ref{th:finite_time_identity_non_random_IC} holds for all initial conditions, and the results from Section~\ref{sect:main_results_asymptotics} can be generalised to random initial data. Specifically, we observe the following:  

\paragraph{1.} Theorem~\ref{th:2.1} holds true for any random or deterministic initial condition for which a weak law of large numbers is known for $\tilde{x}_{\alpha T}(T)$. This follows directly from its proof. 

\paragraph{2.} Theorem~\ref{th:2.2} remains valid for random initial conditions if, for instance, instead of \eqref{eq:linear_decay_IC}, we assume the existence of some $d \in \mathbb{R}_{\geq 1}$ such that for each $\eps > 0$ there exists a constant $C_\eps > 0$ satisfying 
\begin{equation}
\mathbb{P}(x_n^f \geq -dn-C_\eps \text{ for all } n \in \mathbb{N} ) \geq 1-\eps.
\end{equation}
Indeed, in the first part of the proof of Theorem~\ref{th:2.2}, we can condition on the events described above and then let $\eps \to 0$ afterwards. The rest of the proof stays the same.

\paragraph{3.} Similarly, Theorem~\ref{th:2.3}, Proposition~\ref{pro:limit_thm_2.2} and Corollary~\ref{cor:prop_2.2} extend to random initial conditions. Instead of Assumption~\ref{assumpt:2.7}, we suppose that for some $d \in [1,\tfrac{1}{\sqrt{\alpha}}]$, $y_T$ from \eqref{eq:additional_assumpt_IC} converges weakly with respect to the uniform topology on compact sets to $y$, and that for each $\eps > 0$ there exists a constant $C_\eps > 0$ such that for all $T$ large enough, 
\begin{equation}
\mathbb{P}\bigl(y_T(\tau) \geq - \tfrac{1}{2} \tau^2 - C_\eps \text{ for all } \tau \in [0,\hat{c}_2^{-1} \alpha T^{1/3}]\bigr) \geq 1-\eps.
\end{equation}
Now, the function $y$ may be random. It is again càdlàg and (almost surely) piecewise continuous, and satisfies $y(0)=0$. 

Then, since $y_T$ is independent of the TASEP with step initial condition in Theorem~\ref{th:finite_time_identity_non_random_IC}, the results from Theorem~\ref{th:2.3} and Proposition~\ref{pro:limit_thm_2.2} for $d \leq \tfrac{1}{\sqrt{\alpha}}$ remain valid\footnote{For $d < \tfrac{1}{\sqrt{\alpha}}$, we now use $y_T \Rightarrow y$ with $y(0) = 0$ in the proofs because $y_T(0) \geq - C_\eps$ is not sufficient. Further, we exclude the case $d > \tfrac{1}{\sqrt{\alpha}}$ in Proposition~\ref{pro:limit_thm_2.2} because in the half-Bernoulli case, our standard example for random initial conditions, the randomness emerging from the initial data now operates on a Gaussian scale.}.

In the same way, Corollary~\ref{cor:prop_2.2} holds if we assume that $y_T$ from \eqref{eq:cor_prop_2.2_y} converges weakly with respect to the uniform topology on compact sets to $y$, and that for each $\eps > 0$ there exists a constant $C_\eps > 0$ such that for all $T$ large enough, we have $y_T(\tau) \geq - 2 (1+d)^{-2} \tau^2 - C_\eps$ for all $\tau \in [-\hat{c}_2^{-1} (1-d^{-2})T^{1/3}, \hat{c}_2^{-1} d^{-2} T^{1/3}]$ with probability at least $1-\eps$. 

\begin{remark} 
	In particular, the extended versions of Theorem~\ref{th:2.1}, Theorem~\ref{th:2.2} and Theorem~\ref{th:2.3} apply to the TASEP with half-Bernoulli initial condition and wall. We denote the density of this initial data by $\rho \in (0,1)$. Using standard techniques such as Lundberg's inequality, Donsker's theorem, and Doob's submartingale inequality, one can readily verify that the above assumptions are met. For particles close to the wall, we obtain the same asymptotic fluctuations as in the step initial condition case. If the macroscopic label of the leftmost particle affected by the wall is $\alpha T$ with $\alpha > \rho^2$, then Theorem~\ref{th:2.2} yields a decoupling in the limit distribution. Notably, the scaling of $\tilde{x}_{\alpha T}(T)$ is actually Gaussian in this case, that is, $T^{1/2}$ instead of $T^{1/3}$. Due to the wall influence, fluctuations of $x^f_{\alpha T}(T)$ occur on both scales. If $\alpha = \rho^2$, then Theorem~\ref{th:2.3} applies and, for $d = \rho^{-1}$, the process $(y_T(\tau))$ from \eqref{eq:additional_assumpt_IC} converges weakly to $(\sqrt{2} \mathcal{B}(\tau))$, where $\mathcal{B}$ denotes a standard Brownian motion. 
	
	Furthermore, \eqref{eq:prop_limit_thm_2.2_1} in Proposition~\ref{pro:limit_thm_2.2} and Corollary~\ref{cor:prop_2.2} provide, for example, the one-point limit distributions of the TASEP with half-Bernoulli, Bernoulli, or Bernoulli-flat initial conditions along the characteristic line. The respective limit functions are given by $y(\tau) = \sqrt{2} \mathcal{B}(\tau)$ for $\tau \geq 0$ or $\tau \in \mathbb{R}$ and $y(\tau)=\mathbb{1}_{\tau \geq 0} \sqrt{2} \mathcal{B}(\tau)$ for $\tau \in \mathbb{R}$, where in the two latter cases, $\mathcal{B}$ is a two-sided standard Brownian motion. Naturally, the variational formulas coincide with those obtained in Corollary~2.8 of \cite{CLW16}, see also \cite{CFS16}. Indeed, in \cite{CLW16}, the generalisation from deterministic to random initial conditions was performed using arguments comparable to those above, see Remark~1 of \cite{CLW16}.
\end{remark} 

\subsection{Shocks in the macroscopic density profile} \label{sect:main_shocks} 

We take another look at the framework of Section~\ref{sect:main_results_asymptotics}. Under some circumstances, the limit distribution of the tagged particle's fluctuations takes a product form. We observe that this corresponds to a discontinuity, that is, a shock, in the macroscopic density profile of the process. For $x^f_{\alpha T}(T) \simeq \xi T$, we consider the asymptotic empirical density at $\xi$ and define
\begin{equation}
\rho^f_r(\xi) \coloneqq \liminf_{\eta \downarrow 0} \lim_{T \to \infty} \tfrac{\eta T}{x^f_{(\alpha-\eta)T}(T)-x^f_{\alpha T}(T)},  \ \rho^f_l(\xi) \coloneqq \limsup_{\eta \downarrow 0} \lim_{T \to \infty} \tfrac{\eta T}{x^f_{\alpha T}(T)-x^f_{(\alpha+\eta) T}(T)}.
\end{equation}
Since we suppose that the particles are either not affected by the wall or Assumption~\ref{assumpt:2.1.1} applies for suitable scalings,
\begin{equation}
\frac{\eta T}{x^f_{(\alpha-\eta)T}(T)-x^f_{\alpha T}(T)} \ \text{ and } \ \frac{\eta T}{x^f_{\alpha T}(T)-x^f_{(\alpha+\eta) T}(T)}
\end{equation}
converge in probability to constant values as $T \to \infty$\footnote{By our asymptotic results, the particle fluctuations are in $\mathcal{O}(T^{1/3})$ (for deterministic initial conditions). Also a weak law of large numbers is readily obtained by the formula \eqref{eq:thm_finite_time_identity_non_random_IC} and by one-point estimates for TASEP with step initial condition, see \cite{BBF21}.}. We compute $\rho^f_r(\xi)$ and $\rho^f_l(\xi)$ by taking $\eta \downarrow 0$ on these limits. See Figure~\ref{fi:example_limit_distr_step} and Figure~\ref{fi:example_limit_distr_periodic} for examples. 

\begin{lem} \label{lemma_density_wall_influence_times}
	Suppose $x^f_{\alpha T}(T) \simeq \xi T$ and $f$ fulfils Assumption~\ref{assumpt:2.1.1} with parameters $\alpha_0 < \dots < \alpha_n$. Then, it holds
	\begin{equation} 
	\rho^f_r(\xi) = \sqrt{ \frac{\alpha}{\alpha_0}} \ \text{ and } \  \rho^f_l(\xi) \leq \sqrt{\frac{\alpha}{\alpha_n}}. 
	\end{equation}   
	In the definition of $\rho^f_r(\xi)$, the limit for $\eta \downarrow 0$ exists. If $x^f_{\alpha T}(T)$ is in the interior of the region affected by the wall and Assumption~\ref{assumpt:2.1.1} is fulfilled with $\eps \leq \alpha_n$, then this is also the case for $\rho^f_l(\xi)$, and we have $ \rho^f_l(\xi) = \sqrt{\alpha \alpha_{n \vphantom{0}}^{-1}}$.
\end{lem}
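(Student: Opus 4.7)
The plan is to use the finite-time identity of Theorem~\ref{thm_finite_time_identity_non_random_IC} together with the one-point estimates for step-initial-condition TASEP from \cite{BBF21} to establish a weak law of large numbers for $x^f_{\alpha' T}(T)/T$ with $\alpha'$ in a neighbourhood of $\alpha$, and then to read off the densities by an envelope-theorem argument at the wall's touching times $\beta_i = 1-\alpha_i$. From \eqref{eq_thm_finite_time_identity_non_random_IC} at label $\alpha' T$ and the hydrodynamic limit $T^{-1} x_{\alpha' T}((1-\beta)T) \to h_{\alpha'}(\beta) := \max(1-\beta-2\sqrt{\alpha'(1-\beta)},-\alpha')$, a weak law of large numbers yields that $T^{-1} x^f_{\alpha' T}(T)$ converges in probability to $\xi(\alpha') = \min(A(\alpha'),B(\alpha'))$, where
\begin{equation}
A(\alpha') = \inf_{\beta \in [0,1]}\bigl\{\bar f(\beta) + h_{\alpha'}(\beta)\bigr\},\qquad \bar f(\beta) = \lim_{T\to\infty} T^{-1} f(\beta T),
\end{equation}
and $B(\alpha') = g_{\alpha'}$ is the analogous infimum over the initial-condition events in \eqref{eq_thm_finite_time_identity_non_random_IC}. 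Under Assumption~\ref{assumpt_2.1.1} the infimum defining $A(\alpha)$ is attained precisely at $\beta_i = 1-\alpha_i$, $i=0,\dots,n$, each with value $\xi$, and by monotonicity of the wall coupling one has $B(\alpha)\geq\xi$.

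The envelope calculation gives $\partial_{\alpha'} F_{\alpha'}(\beta)\big|_{\beta=\beta_i,\,\alpha'=\alpha} = -\sqrt{\alpha_i/\alpha}$ with $F_{\alpha'} := \bar f + h_{\alpha'}$. Since $\alpha_0 < \dots < \alpha_n$, for $\eta>0$ small the infimum of $F_{\alpha-\eta}$ is attained near $\beta_0$ (the slowest-rising local minimum) and that of $F_{\alpha+\eta}$ near $\beta_n$ (the fastest-falling), giving
\begin{equation}
A(\alpha - \eta) = \xi + \sqrt{\alpha_0/\alpha}\,\eta + o(\eta), \qquad A(\alpha + \eta) = \xi - \sqrt{\alpha_n/\alpha}\,\eta + o(\eta).
\end{equation}

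For $\alpha'=\alpha-\eta$, the rightward particle is at least as strongly wall-restricted as the $\alpha T$-th particle, so $A(\alpha-\eta)<B(\alpha-\eta)$ and $\xi(\alpha-\eta)=A(\alpha-\eta)$; substituting into the definition of $\rho^f_r(\xi)$ and letting $\eta\downarrow 0$ yields the existence of the limit and the value $\sqrt{\alpha/\alpha_0}$. For $\alpha'=\alpha+\eta$ only $\xi(\alpha+\eta)\leq A(\alpha+\eta)$ is guaranteed in general, hence $\xi-\xi(\alpha+\eta)\geq\sqrt{\alpha_n/\alpha}\,\eta+o(\eta)$ and $\rho^f_l(\xi)\leq\sqrt{\alpha/\alpha_n}$. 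In the interior case $\xi<g_\alpha=B(\alpha)$, the strict inequality $A<B$ persists in a full neighbourhood of $\alpha$ and so $\xi(\alpha+\eta)=A(\alpha+\eta)$ as well; the condition $\eps\leq\alpha_n$ is used to ensure that the touching structure of Assumption~\ref{assumpt_2.1.1} near $\beta_n$ is preserved after the small label shift, so equality $\rho^f_l(\xi)=\sqrt{\alpha/\alpha_n}$ follows.

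The main technical point is to show that the infimum defining $A(\alpha')$ stays concentrated in neighbourhoods of $\{\beta_i\}$ as $\alpha'$ varies near $\alpha$, so that the envelope formula yields an error $o(\eta)$ uniformly. This relies on the macroscopic gap $K(\eps)T$ imposed in Assumption~\ref{assumpt_2.1.1}(a), which forces $F_\alpha(\beta)-\xi$ to be bounded below by a positive constant outside small neighbourhoods of the $\beta_i$, combined with the quadratic lower bound on $g_T^i$ from (b) controlling the local behaviour. Together with the $\mathcal{O}(T^{1/3})$ one-point fluctuation bounds from \cite{BBF21}, this justifies exchanging the $T\to\infty$ and $\eta\downarrow 0$ limits in the definitions of $\rho^f_r(\xi)$ and $\rho^f_l(\xi)$.
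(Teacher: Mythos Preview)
Your approach is essentially the same as the paper's: both pin down the law-of-large-numbers value $\xi(\alpha')$ for $\alpha'$ near $\alpha$ via the finite-time identity and one-point estimates, and then extract the one-sided derivatives $\partial_{\alpha'}\xi(\alpha')$ from the touching structure of Assumption~\ref{assumpt_2.1.1} at the times $(1-\alpha_i)T$. You phrase this via an envelope theorem for $A(\alpha')=\inf_\beta F_{\alpha'}(\beta)$, whereas the paper works directly with inequalities: it first bounds $\xi_\eta\le\xi+\sqrt{\alpha_i/\alpha}\,\eta+\mathcal{O}(\eta^2)$ from the constraint at each $\alpha_i$ (yielding $\rho^f_r(\xi)\ge\sqrt{\alpha/\alpha_0}$), and then rules out a strictly larger density by a contradiction argument that checks, via explicit series expansions in $\delta=\beta-(1-\alpha_0)$ and the gap $K(\eps)$, that a putative faster spreading would leave $x^f_{(\alpha-\eta)T}(T)$ with no wall influence at all.

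One caveat worth flagging: your $\bar f(\beta)=\lim_{T\to\infty}T^{-1}f(\beta T)$ need not exist pointwise under Assumption~\ref{assumpt_2.1.1}, which away from the $\alpha_i$'s gives only a lower bound on $f$. The paper's inequality formulation sidesteps this entirely. Your argument still goes through if you replace $\bar f$ by working directly with the finite-$T$ infimum (or with $\liminf$/$\limsup$ as appropriate), but the envelope step should be phrased with that in mind rather than assuming a macroscopic limit of $f$ exists.
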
 

Lemma~\ref{lemma_density_wall_influence_times} is proven in Section~\ref{sect:proof_density_shock}. The assumption $\eps \leq \alpha_n$ is technical and emerges from series expansions in the proof. 

\paragraph{Shocks at points of several wall influences.} The limit distribution of the tagged particle's fluctuations takes a product form whenever there are several time regions where the wall influences the particle, that is, if $n > 0$ in Lemma~\ref{lemma_density_wall_influence_times}. The lemma implies that indeed, the macroscopic empirical density has a discontinuity. 

\paragraph{Shocks at the boundary of the region affected by the wall.} If $x^f_{\alpha T}(T) \simeq \xi T$ with $\xi =g_\alpha < 1-2\sqrt{\alpha}$ and there is a wall influence at a time $\alpha_0 T$ with $\alpha_0 \in (\alpha,1)$, then Theorem~\ref{th:2.2} provides a product limit distribution. Lemma~\ref{lemma_density_wall_influence_times} implies $\rho^f_r(\xi) \geq \sqrt{\alpha \alpha_0^{-1}}$. However, to the left of $\xi T$, the particles are not affected by the wall until time $T$. Thus, the macroscopic density there is the same like in $\tilde{x}(T)$. Since TASEP is attractive and we consider initial conditions with no particles to the right of the origin, the density around the $\alpha T$-th particle is bounded from above by the density around the $\alpha T$-th particle in a TASEP with step initial condition, that is, $\sqrt{\alpha}$. This implies $\rho^f_l(\xi) < \rho^f_r(\xi)$. 

\section{TASEP with half-$d$-periodic initial condition and wall} \label{sect:half_d_periodic_TASEP}

Special cases of the processes discussed in Section~\ref{sect:main_results} are TASEPs with half-$d$-periodic initial conditions and wall constraints, where $d \in \mathbb{R}_{\geq 1}$. In this section, we take a detailed look at them to understand when the different situations described in Section~\ref{sect:main_results_asymptotics} occur. We recall that such a process, denoted by $(x^f(t),t \geq 0)$, has the following properties: 
\begin{equation}
x^f_n(0) = - \lfloor d (n-1) \rfloor, \ n \in \mathbb{N}, \ \text{  and  } \ x^f_1(t) \leq f(t), \ t \geq 0.
\end{equation}
We again let $(x(t),t \geq 0)$ be a TASEP with step initial condition and denote by $(\tilde{x}(t),t \geq 0)$ a TASEP with half-$d$-periodic initial condition. 

For $\alpha \in (0,1)$, we have $\tilde{x}_{\alpha T}(T) \simeq g_\alpha T$  with
\begin{equation}
g_\alpha  = \begin{cases}
1-2\sqrt{\alpha} \ & \text{if } \alpha \in (0,d^{-2}], \\ 1-d^{-1}-d\alpha \ & \text{if } \alpha \in (d^{-2},1). 
\end{cases}
\end{equation}
This scaling readily emerges from hydrodynamic theory: the macroscopic density of $\tilde{x}(T)$ is given by \cite{Li99}
\begin{equation} \label{eq:3_density} 
\tilde{\rho}(x,T) = \begin{cases}\tfrac{1}{d} \ & \text{if} \ x \leq (1-\tfrac{2}{d})T, \\ \tfrac{1}{2}(1-\tfrac{x}{T}) \ & \text{if} \ (1-\tfrac{2}{d})T<x<T, \\ 0 \ & \text{if} \ x \geq T. \end{cases} 
\end{equation}

By Proposition~\ref{pro:limit_thm_2.2}, see also \cite{BFP06,BFPS06,BFS07,FO17}, the fluctuations of $\tilde{x}_{\alpha T}(T)$ show the following asymptotic behaviour: 
\begin{equation} \label{eq:3_limits} 
\begin{aligned}
\lim_{T \to \infty} \mathbb{P}(\tilde{x}_{\alpha T}(T) > g_\alpha T - S T^{1/3}) = \begin{cases} 
F_{\textup{GUE}}((1-\sqrt{\alpha})^{-2/3} \alpha^{1/6} S)  & \text{if } \alpha \in (0,d^{-2}), \\
F_{2 \to 1;0}((1-\sqrt{\alpha})^{-2/3} \alpha^{1/6} S)  & \text{if } \alpha = d^{-2}, \\ 
F_{\textup{GOE}}(2^{2/3} (1-d^{-1})^{-2/3} d^{-1/3} S)   &\text{if } \alpha > d^{-2}.
\end{cases} 
\end{aligned} 
\end{equation}
Here, $F_{\textup{GOE}}$ is the GOE Tracy--Widom distribution function and $F_{2 \to 1;0}$ denotes the distribution of the Airy$_{2 \to 1}$ process in $0$. In addition to Proposition~\ref{pro:limit_thm_2.2}, we used the variational formulas from Corollary~1.3 of \cite{Jo03b} and Theorem~1 of \cite{QR13b}. \\

We now focus on the process with a wall. The maximal effect on the particles is produced by a wall that remains at the origin for all times, that is, $f \equiv 0$. The particles pile up behind this wall and form a block of density $1$. The largest $\alpha$ such that $x^f_{\alpha T}(T)$ is influenced by this wall is such that $g_\alpha T$ is at the left boundary of the block, meaning $g_\alpha  = -\alpha$. Thus, $\alpha = d^{-1}$. We deduce that for any wall under consideration, particles with labels $\alpha T, \alpha \in (d^{-1},1)$ are not affected. From now on, we suppose $\alpha \in (0,d^{-1}] \cap (0,1)$.  

As before, we are interested in the case $x^f_{\alpha T}(T) \simeq \xi T$ with $\xi > -\alpha$. Notice that for $\alpha \in (0,d^{-1})$, $\xi \leq -\alpha$ reduces to $\xi = -\alpha$ because, by previous reasoning, the $d^{-1}T$-th particle always has the scaling $g_{d^{-1}} T = - d^{-1} T$. In particular, between $x^f_{d^{-1}T}(T)$ and $x^f_{\alpha T}(T)$, the macroscopic density equals $1$. Indeed, Theorem~\ref{th:2.1} implies that, in this case, the $\alpha T$-th particle asymptotically ends up to the right of or at $-\alpha T$. If $f \equiv 0$, there are no asymptotic fluctuations.\\

Our results in Section~\ref{sect:main_results_asymptotics} provide the limit distributions of the tagged particle's fluctuations, depending on whether it is located in the interior of the region affected by the wall or ends up on its boundary: 

\paragraph{1.} For $\xi \in (-\alpha,g_\alpha )$, we obtain the same convergence results like for TASEP with step initial condition and wall constraint. This is a consequence of Theorem~\ref{th:2.1}.

\paragraph{2.} For $\xi = g_\alpha$ and $\alpha \in (d^{-2},d^{-1})$, Theorem~\ref{th:2.2} applies. If $f$ satisfies Assumption~\ref{assumpt:2.1.1}, we obtain for each $S \in \mathbb{R}$:
\begin{equation}
\begin{aligned}
\lim_{T \to \infty} \mathbb{P}(x^f_{\alpha T}(T) > (1-d^{-1}-d\alpha) T - S T^{1/3}) =  F_{\textup{GOE}}(2^{2/3}c_1^{-1} S) \times \Phi_n^{\alpha_0, \dots, \alpha_n}(S)
\end{aligned} 
\end{equation}
with $c_1 =(1-d^{-1})^{2/3} d^{1/3}$. In the special case $f \equiv 0$, Theorem~\ref{th:2.2} confirms that the $d^{-1}T$-th particle asymptotically shows GOE fluctuations to the left and no fluctuations to the right.  

\paragraph{3.} For $\xi = g_\alpha$ and $\alpha \in (0,d^{-2}]$, Theorem~\ref{th:2.3} applies. If $f$ fulfils Assumption~\ref{assumpt:2.1.1} with $\alpha_n =1$, then $\lim_{T \to \infty} \mathbb{P}(x^f_{\alpha T}(T) > (1-2\sqrt{\alpha})T - S T^{1/3})$ equals 

\begin{equation} \label{eq:cor_2.1.3_0}
\mathbb{P} \Bigl ( \sup_{\tau \geq 0} \{ \mathcal{A}_2(\tau)-g_n(\tau) \} < (c_1^n)^{-1} S \Bigr )  \times  \Phi_{n-1}^{\alpha_0, \dots, \alpha_{n-1}}(S) 
\end{equation}
if $\alpha < d^{-2}$ and 
\begin{equation} \label{eq:cor_2.1.3_1} 
\mathbb{P} \Bigl ( \sup_{\tau \geq 0} \{ \mathcal{A}_2(\tau)-g_n(\tau) \} < c_1^{-1} S, \ \sup_{\tau  \leq 0} \{ \mathcal{A}_2(\tau)-\tau^2 \} < c_1^{-1} 	S \Bigr )  \times  \Phi_{n-1}^{\alpha_0, \dots, \alpha_{n-1}}(S)
\end{equation}
if $\alpha = d^{-2}$. 

We conclude this section with an example. 

\begin{figure}[t!]
	\centering  \includegraphics[scale=1]{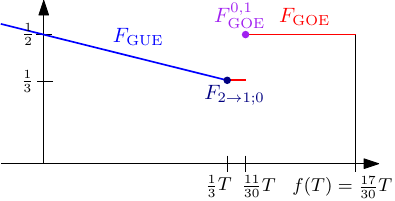}
	\caption{The macroscopic density profile of $x^{\textup{step},f}(T)$ in Example~\ref{example}. We denote $F_{\textup{GOE}}^{0,1}(s) = F_{\textup{GOE}}(c_0 s ) \times  F_{\textup{GOE}}(c_1 s)$ for constants $c_0,c_1 > 0$ to be specified below.} 
	\label{fi:example_limit_distr_step}
\end{figure}

\begin{figure}[t!]
	\centering
	\includegraphics[scale=0.7]{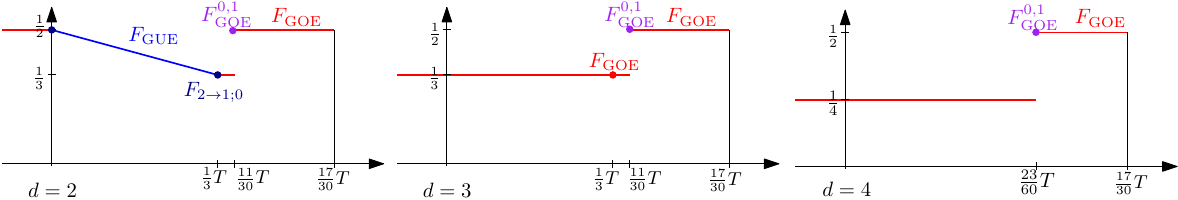}
	\caption{The macroscopic density profiles of $x^f(T)$ in Example~\ref{example} for $d=2,3,4$. We have $f(T)=\tfrac{17}{30}T$. In the interior of the regions affected by the wall, the limiting fluctuations match those in the step initial condition case (Situation 1 in Section~\ref{sect:main_results_asymptotics}). For $d=2,3$, the limit distribution near the boundary of the regions at $\tfrac{1}{3}T$ interpolates (Situation 3). For $d=4$, the boundary is located at $\tfrac{23}{60}T$, and the limiting distribution is a product of two GOE Tracy--Widom distributions (Situation 2). }
	\label{fi:example_limit_distr_periodic} 
\end{figure}

\begin{example} \label{example} We consider the piecewise linear function 
	\begin{equation}
	f(t) = \begin{cases} \tfrac{2}{3}t, \ &  \ t \in [0,0.35T), \\ \tfrac{1}{15}T+\tfrac{1}{2}t, \ & \ t \in [0.35T,T]. \end{cases}
	\end{equation}
	By \cite{BBF21,FG24}, the one-point limit distributions in a TASEP $(x^{\textup{step},f}(t),t\geq 0)$ with step initial condition and moving wall $f$ are:
	
	\begin{equation} \label{eq:example_0}
	\lim_{T \to \infty} \mathbb{P}(x^{\textup{step},f}_{\alpha T}(T) > \xi T - S T^{1/3})  = \begin{cases} F_{\textup{GOE}}( (\tfrac{2}{\alpha})^{1/3}  S),  & \alpha \in (0,\tfrac{1}{10}), \\ 
	F_{\textup{GOE}}( (\tfrac{2}{\alpha})^{1/3} S) F_{\textup{GOE}}((3\alpha)^{-1/3} S),   & \alpha =\tfrac{1}{10}, \\
	F_{\textup{GOE}}((3\alpha)^{-1/3} S),  & \alpha \in (\tfrac{1}{10},\tfrac{1}{9}), \\ 
	F_{2 \to 1;0}(2^{-2/3} 3^{1/3} S), & \alpha = \tfrac{1}{9}, \\ 
	F_{\textup{GUE}}(c_1(\alpha)^{-1}S),  & \alpha \in (\tfrac{1}{9},1), \end{cases} 
	\end{equation}
	where $c_1(\alpha) = \alpha^{-1/6}(1-\sqrt{\alpha})^{2/3}$ and $\xi = \tfrac{17}{30}-2\alpha$ for $\alpha \in (0,\tfrac{1}{10}]$, $\xi = \tfrac{2}{3}-3\alpha$ for $\alpha \in [\tfrac{1}{10},\tfrac{1}{9}]$ and $\xi = 1-2\sqrt{\alpha}$ else. The asymptotic fluctuations in the different regions of the macroscopic density profile are in accordance with KPZ theory, see Figure~\ref{fi:example_limit_distr_step}.
	
	We compare \eqref{eq:example_0} to the  half-$d$-periodic case for $d=2$, $d=3$ and $d \geq 4$. The corresponding density profiles are depicted in Figure~\ref{fi:example_limit_distr_periodic}.
	
	For $d=2$, the leftmost particle affected by the wall is contained in the rarefaction fan region of the original density profile \eqref{eq:3_density}. Hence, for $\alpha \in (0,\tfrac{1}{9}]$, we obtain the same limit distributions as in \eqref{eq:example_0}. For $ \alpha > \tfrac{1}{9}$, we find the original limit distributions from \eqref{eq:3_limits}. 
	
	For $d=3$, we obtain the same limit distributions as in \eqref{eq:example_0} for $\alpha \in (0,\tfrac{1}{9})$. In this case, the $d^{-2}T$-th particle is at the boundary of the region affected by the wall, and
	\begin{equation}
	\lim_{T \to \infty} \mathbb{P}(x^{f}_{d^{-2} T}(T) > (1-2d^{-1}) T - S T^{1/3}) =F_{\textup{GOE}}(3^{1/3}S).
	\end{equation}
	For $\alpha > d^{-2}$, we get convergence to $F_{\textup{GOE}}(3^{1/3}S)$ by \eqref{eq:3_limits}.
	
	Lastly, for $d \geq 4$, we find for $\alpha_d = \tfrac{13d-30}{30(d-2)d}$:
	\begin{equation} \begin{aligned} 
	\lim_{T \to \infty} \mathbb{P}(x^{f}_{\alpha T}(T) > \xi T - S T^{1/3})  &= \begin{cases} F_{\textup{GOE}}((\tfrac{2}{\alpha})^{1/3}  S), \hphantom{  F_{\textup{GOE}}(2^{2/3}c_1(d^{-2})^{-1}S)} \  \alpha \in (0,\alpha_d), \\
	F_{\textup{GOE}}((\tfrac{2}{\alpha})^{1/3} S) F_{\textup{GOE}}(2^{2/3}c_1(d^{-2})^{-1}S), \  \alpha = \alpha_d, \\ 
	F_{\textup{GOE}}(2^{2/3}c_1(d^{-2})^{-1}S), \hphantom{F_{\textup{GOE}}((\tfrac{2}{\alpha})^{1/3} S)} \ \alpha > \alpha_d,
	\end{cases} 
	\end{aligned} \end{equation}
	where $\xi = \tfrac{17}{30}-2\alpha$ for $\alpha \in (0,\alpha_d)$ and $\xi = g_\alpha $ else. Here, the product limit distribution is not due to several wall influences but emerges from a decoupling at the boundary of the region affected by the wall. 
\end{example}

\section{Proof of the finite-time identity} \label{sect:proof_finite_time_identity} 

In this section, we prove Theorem~\ref{th:finite_time_identity_non_random_IC}. For this purpose, we make use of multi-species TASEPs and the colour-position symmetry, as reviewed in Section~3 of \cite{BBF21}. It is useful to describe these processes in terms of their particle configurations.

In a continuous-time multi-species TASEP, each particle is assigned a colour in $\mathbb{Z} \cup \{+\infty\}$. A particle configuration $\eta : \mathbb{Z} \to \mathbb{Z} \cup \{+\infty\}$ maps each position in $\mathbb{Z}$ to the colour of the particle located there. Lower colours have priority over higher colours, and the value $+\infty$ is interpreted as a hole. 

Since it will be used in the proof, we recall the standard construction of the process as described in \cite{BBF21}, which guarantees well-definedness by the usual arguments \cite{Har72,Har78,Hol70,Lig72}. The multi-species TASEP starting at a configuration $\eta_0$ is constructed with help of a family of jointly independent Poisson processes $\{\mathcal{P}_z, z \in \mathbb{Z} \}$ with rate $1$ that are independent of $\eta_0$ as well. At each time when a Poisson process $\mathcal{P}_z$ has an event, a \textit{swap operator} is applied to the configuration of the process. 
We define these operators by 
\begin{equation}
W_{(z,z+1)} \eta = \begin{cases} \eta \ & \text{if} \ \eta(z) \geq \eta(z+1), \\ \sigma_{(z,z+1)} \eta \ & \text{if} \ \eta(z) < \eta(z+1), \end{cases}
\end{equation}
with 
\begin{equation}
(\sigma_{(z,z+1)}\eta)(i) = \begin{cases} \eta(z+1), \ & i=z, \\ \eta(z), \ &i=z+1, \\ \eta(i), \ & i \in \mathbb{Z} \setminus \{z,z+1\}. \end{cases} 
\end{equation}
In words, $W_{(z,z+1)}$ exchanges the particles at the sites $z$ and $z+1$ if beforehand, the particle at $z$ has a lower colour. Several TASEPs are coupled by \emph{basic coupling} if they are constructed using the same family $\{ \mathcal{P}_z, z \in \mathbb{Z}\}$.  

TASEPs with less colours can be viewed as marginals of multi-species TASEPs by bundling particles with colours in subsets forming a partition of $\mathbb{Z} \cup \{+\infty\}$ into one respective type. The partition must be such that a prioritisation of the groups among each other is maintained. For example, in a multi-species TASEP with \textit{packed} initial condition, meaning $\eta_0(z) = z$ for each $z \in \mathbb{Z}$, one can view colours $\leq 0$ as particles and colours $> 0$ as holes, and obtains a single-species TASEP with step initial condition as marginal. 

We consider multi-species TASEPs whose configurations correspond to permutations of the integers. The following result is known as colour-position symmetry: 

\begin{prop} \label{pro:colour_position_symmetry} 
	Let $\textup{id}:\mathbb{Z} \to \mathbb{Z}$ be the identity bijection and denote by $\textup{inv}$ the map that takes the inverse of a permutation. Then, for any $k \in \mathbb{N}$ and integers $z_1, \dots, z_k \in \mathbb{Z}$, it holds 
	\begin{equation}
	W_{(z_k,z_k+1)} \dots W_{(z_1,z_1+1)} \textup{id} = \textup{inv} (W_{(z_1,z_1+1)} \dots W_{(z_k,z_k+1)} \textup{id}).
	\end{equation}
\end{prop} 

Here, we restated Proposition~3.3 of \cite{BBF21}. It was first proven in a probabilistic setting as Lemma~2.1 of \cite{AHR09} and was subject to further generalisations, see \cite{AAV11,BB19} and \cite{Buf20} for the algebraic background. 

Recalling the construction from the previous paragraph, Proposition~\ref{pro:colour_position_symmetry} tells us the following: let $(\eta_t,t\geq 0)$ be a multi-species TASEP with initial condition $W_{s_1} \dots W_{s_m} \text{id}$ for $m \in \mathbb{N}_0$ and pairs $s_i = (z_i, z_i+1), i = 1, \dots, m$. We denote by $(\hat{\eta}_t, t \in [0,T])$ a multi-species TASEP with $\hat{\eta}_0 = \textup{id}$ that is only defined on the time interval $[0,T]$ and after its evolution, $W_{s_m} \dots W_{s_1}$ is applied to its configuration. Then, we have
\begin{equation}
\textup{inv}(\eta_T) \overset{(d)}{=} \hat{\eta}_T.
\end{equation}
The statement remains true if we impose a wall constraint, see also Proposition~3.4 of \cite{BBF21}. If for $\eta^f_t$, jumps are suppressed at sites to the right of $f(t)$, then 
\begin{equation} \label{eq:colour_position_symmetry_wall}
\text{inv}(\eta^f_T) \overset{(d)}{=} \hat{\eta}_{f,T},
\end{equation}
where for $\hat{\eta}_{f,t}$, jumps are suppressed at sites to the right of $f(T-t)$. Apart from the walls, the processes $(\eta^f_t,t\geq 0)$ and $(\hat{\eta}_{f,t}, t \in [0,T])$ have the same properties as above. 

\paragraph{Notation.} Before proceeding with the proof of Theorem~\ref{th:finite_time_identity_non_random_IC}, we summarise the notation for reference. 
To simplify, we will denote the initial condition $\{x_n^f, n \in \mathbb{N}\}$ by $\{u_n, n \in \mathbb{N}\}$. Then, $(x^f(t),t\geq 0)$ represents the TASEP with wall $f$ and initial condition $x_n^f(0)=u_n$. Below, we relate $x_n^f(T)$ to a TASEP $(y(t),t \in [0,T])$ with step initial condition and jumps only allowed at sites $\geq s+1-f(T-t)$. This process is coupled to another TASEP with step initial condition, denoted by $(x(t),t\geq 0)$, without wall constraint.

\begin{proof}[Proof of Theorem~\ref{th:finite_time_identity_non_random_IC}]
	Throughout this proof, we consider $\{x_n^f, n \in \mathbb{N} \}$ as a fixed non-random initial condition. However, our combinatorial arguments also extend to the random case. If one prefers to avoid working with random objects, one can observe that $x_n^f(T)$ only depends on the positions of the first $n$ particles in the initial configuration. For this reason, the law of total probability implies 
	\begin{equation}
	\begin{aligned} 
	&\mathbb{P}(x_n^f(T) > s) \\ & = \sum_{(y_n,\dots,y_1) \in I} \mathbb{P}(x_n^f(T) > s \ | \ x_1^f = y_1, \dots, x_n^f=y_n) \mathbb{P}(x_1^f = y_1, \dots, x_n^f=y_n),
	\end{aligned} 
	\end{equation}
	where $I = \{y_n < \dots < y_1 \leq 0 : \ \mathbb{P}(x_1^f=y_1, \dots, x_n^f=y_n) > 0\}$. The conditional probabilities in the sum can be treated in the same way as for deterministic initial conditions. 
	
	To simplify the notation, we denote $x_n^f$ by $u_n$ in the following.
	
	For $n=1$, the result follows from Remark~3.2 of \cite{BBF21} because $(x^f_1(t),t \geq 0)$ behaves like the first particle in a TASEP with step initial condition shifted by $u_1$ and with wall $f$. From now on, let $n \geq 2$. 
	
	We first couple $(x^f(t),t \geq 0)$ with a multi-species TASEP $(\eta^f_t, t \geq 0)$. On the latter, we impose $\eta^f_0 = \pi (\text{id})$ and allow jumps involving only positions $\leq f(t)$. The permutation $\pi$ is defined as follows: 
	we fix $k \in \{0, \dots, n-2\}$ such that \begin{equation} u_{n-k} < u_n + n - 1 \leq u_{n-k-1}. \end{equation}
	In order to match the initial condition of $(x^f(t),t\geq 0)$ on sites $\geq u_n$, we want $\pi$ to swap the particles in $\{u_n+1, \dots, u_n+n-1\} \setminus \{u_{n-1}, \dots, u_{n-k}\}$ with those in $\{u_{n-k-1}, \dots, u_1\}$ as depicted in Figure~\ref{fi:permutation}. We set
	\begin{equation}
	\pi = \pi_k \pi_{k-1} \dots \pi_0
	\end{equation}
	with 
	\begin{equation}
	\begin{aligned}
	&\pi_j = \pi_j^{u_{n-j-1}-u_{n-j}-2} \dots \pi_j^0, \ \ j = 0, \dots, k-1, \\
	& \pi_k = \pi_k^{u_n+n-1-u_{n-k}-1} \dots \pi_k^0
	\end{aligned}
	\end{equation}
	and $\pi_j^i$ exchanging $u_{n-j} + 1 + i$ and $u_{1+u_{n-j}-u_n-j+i}$. Notice that $u_{n-j}-u_n-j$ is the fixed number of empty sites between $u_n$ and $u_{n-j}$ in the initial condition. We set $\pi_j = \text{id}$ if $u_{n-j-1} = u_{n-j}+1$. Further, we have $\pi_k^{u_n+n-1-u_{n-k}-1} = \text{id}$ if $u_n+n-1=u_{n-k-1}$. The permutations are applied from right to left. 
	We can decompose them into transpositions: 
	\begin{equation}
	\begin{aligned}
	\pi_j^i = & (u_{1+u_{n-j}-u_n-j+i}-1,u_{1+u_{n-j}-u_n-j+i}) \dots (u_{n-j} + 1 + i, u_{n-j} + 2 + i) \\ & \dots (u_{1+u_{n-j}-u_n-j+i}-1,u_{1+u_{n-j}-u_n-j+i}).
	\end{aligned}
	\end{equation}
	It holds $\pi = \pi^{-1}$. In particular, for $W_{s_1}, \dots, W_{s_m}$ being the swap operators corresponding to $\pi$, we have $\pi(\text{id}) = W_{s_1} \dots W_{s_m}(\text{id}) =  W_{s_m} \dots W_{s_1}(\text{id})$. From now on, we only write $\pi$ respectively $\pi_j$ and so forth, implicitly meaning the swap operators. 
	
	\begin{figure}[t!]
		\centering
		\includegraphics[scale=0.9]{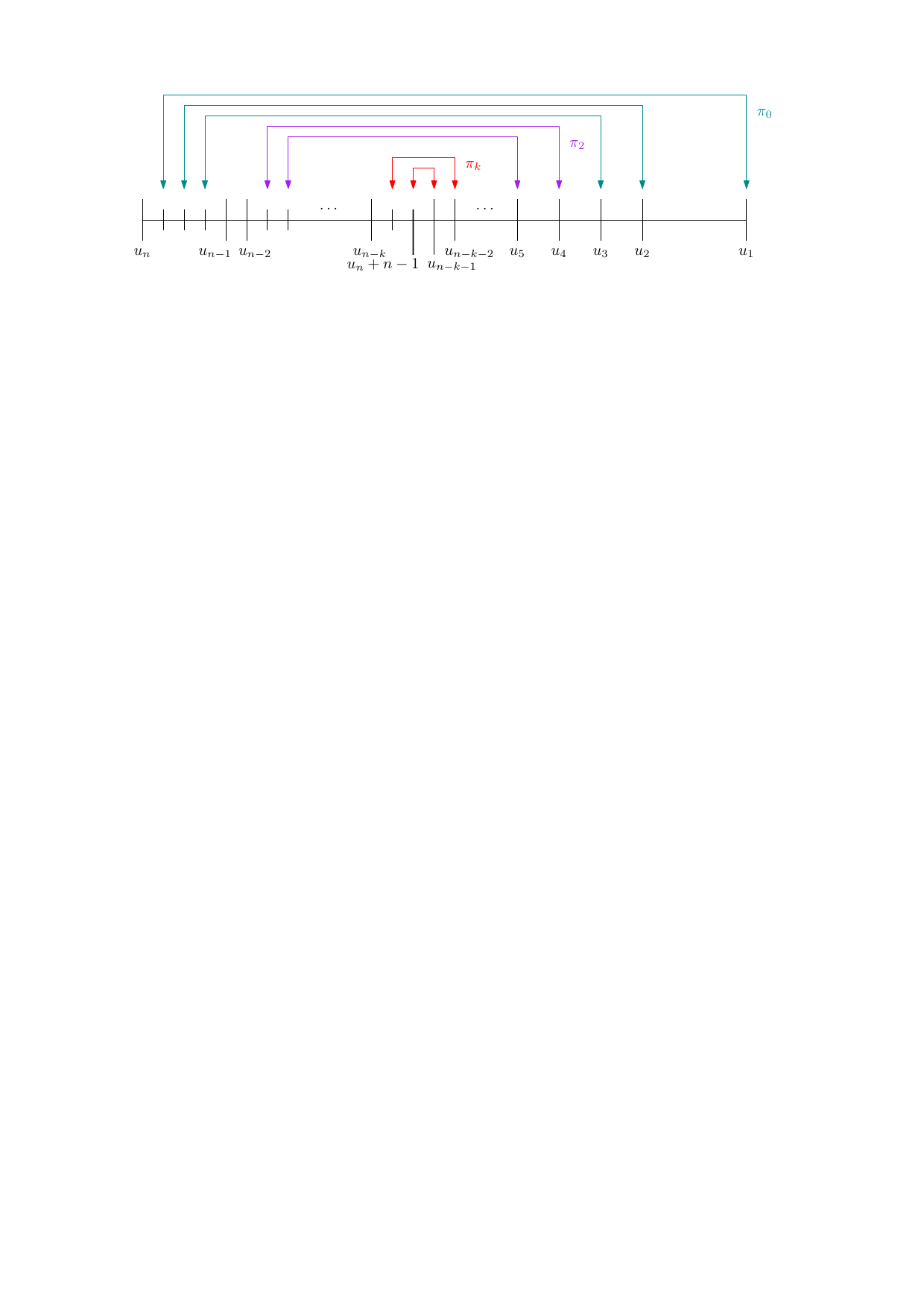}
		\caption{The permutation $\pi$. In this example, $\pi_1 = \text{id}$ and $u_n+n-1 < u_{n-k-1}$. We have set $u_n=x_n^f$. }
		\label{fi:permutation}
	\end{figure}
	
	We couple $(x^f(t),t\geq 0)$ and $(\eta^f_t,t\geq 0)$ by viewing colours $\leq u_n+n-1$ as particles and colours $> u_n+n-1$ as holes. Then, $\eta^f_0$ corresponds to $x^f(0)$ on sites $\geq u_n$ and we obtain 
	\begin{equation}
	\begin{aligned}
	\mathbb{P}(x^f_n(T) > s) = & \mathbb{P} ( \exists \text{ at least } n \text{ numbers } i \leq u_n+n-1: \ \text{inv}(\eta^f_T)(i) > s) \\
	=& \mathbb{P} ( \exists \text{ at least } n \text{ numbers } i \leq u_n+n-1: \ \hat{\eta}_{f,T}(i) > s).
	\end{aligned}
	\end{equation}
	The second identity is due to the colour-position symmetry \eqref{eq:colour_position_symmetry_wall}. Here, $\hat{\eta}_{f,T}$ is a configuration of a multi-species TASEP starting from $\text{id}$ with the following properties: we first let the process evolve up to time $T$, allowing jumps if they only involve positions $\leq f(T-t)$. Afterwards, we apply (the swap operators corresponding to) $\pi $ to the permutation. 
	
	Let $\hathat{\eta}_{f,T}$ be the configuration before $\pi$ is applied, i.e. $\pi(\hathat{\eta}_{f,T}) = \hat{\eta}_{f,T}$. Our main task is to write 
	\begin{equation}
	E \coloneqq \{\exists \text{ at least } n \text{ numbers } i \leq u_n+n-1: \ \hat{\eta}_{f,T}(i) > s \}
	\end{equation}
	in terms of $\hathat{\eta}_{f,T}$. From now on, we identify colours $> s$ as holes and colours $\leq s$ as particles. We define 
	\begin{equation}
	A \coloneqq \bigcap_{j=0}^{n-1} \{ \exists \text{ at least } n-j \text{ numbers } i \leq u_{1+j}: \ \hathat{\eta}_{f,T}(i) > s\}
	\end{equation}
	and show below that $E=A$. Given this identity, we couple the multi-species TASEP $(\hathat{\eta}_{f,t},t \in [0,T])$, with $\hathat{\eta}_{f,0} = \text{id}$, with a single-species TASEP $(y(t),t \in [0,T])$ as follows: we identify colours $>s$ as holes, apply the particle-hole duality, and perform the coordinate shift $z \mapsto s-z+1$. The resulting TASEP $(y(t),t \in [0,T])$ has step initial condition and jumps are only allowed at positions $\geq s+1-f(T-t)$. We obtain 
	\begin{equation}
	\begin{aligned}
	\mathbb{P}(x^f_n(T) > s) = \mathbb{P}(A) = \mathbb{P}(\forall j \in [n-1]: y_{n-j}(T) \geq s+1-u_{1+j}).
	\end{aligned} 
	\end{equation}
	We denote by $(x(t),t\geq 0)$ a TASEP with step initial condition and without wall constraint, and couple it with $(y(t),t \in [0,T])$ by basic coupling, in the time interval $[0,T]$. We claim: 
	\begin{equation} \label{eq:proof_general_IC_coupling_y_and_x}
	\begin{aligned}
	&\mathbb{P}(\forall j \in [n-1]: y_{n-j}(T) \geq s+1-u_{1+j}) \\
	&= \mathbb{P}(\forall t \in [0,T]: x_n(t) \geq s+1-f(T-t), \ \forall j \in [n-1]: x_{n-j}(T) \geq s+1-u_{1+j}).
	\end{aligned} 
	\end{equation}
	First suppose $x_n(t) \geq s+1-f(T-t)$ for all $t \in [0,T]$ and $x_{n-j}(T) \geq s+1-u_{1+j}$ for all $j \in [n-1]$. By the first inequality, it holds $y_{n-j}(t) = x_{n-j}(t)$ for all $t \in [0,T]$, $j \in [n-1]$. In particular, we find $y_{n-j}(T) = x_{n-j}(T) \geq s+1-u_{1+j}$. 
	
	Next, suppose $x_n(t_0) < s+1-f(T-t_0)$ for some $t_0 \in [0,T]$. Then, $y_n(t_0) \leq x_n(t_0)$ implies $y_n(T) = y_n(t_0) < s + 1 - f(T-t_0) \leq s+1 \leq s+1-u_1$ since $u_1 \leq 0$. 
	
	Finally, suppose $x_{n-j}(T) < s+1-u_{1+j}$ for some $j \in [n-1]$. In this case, we obtain $y_{n-j}(T) \leq x_{n-j}(T) < s+1-u_{1+j}$. 
	
	Thus, \eqref{eq:proof_general_IC_coupling_y_and_x} holds true and we deduce \eqref{eq:thm_finite_time_identity_non_random_IC}. 
	
	In the remainder of the proof, we demonstrate that $E=A$. 
	
	\paragraph{Step 1: $A \subseteq E$.} 
	Suppose $A$ occurs. Denote by $\ell$ the number of holes in $\hathat{\eta}_{f,T}$ at sites $\leq u_n+n-1$. If $\ell \geq n$, then $E$ occurs since the numbers of holes at sites $\leq u_n+n-1$ cannot decrease by an application of swap operators. Thus, we need to consider the case $\ell < n$. 
	
	\paragraph{Claim 1:} Suppose for some $i,j$ such that $\pi_j^i \neq \text{id}$ and for some $x \in \{u_{n-j}+1+i, \dots, u_{1+u_{n-j}-u_n-j+i}-1\}$, in $(\pi_j^{i-1} \dots \pi_0^0)(\hathat{\eta}_{f,T})$ (or $(\pi_{j-1}^{u_{n-j} - u_{n-j+1}-2} \dots \pi_0^0)(\hathat{\eta}_{f,T})$ or $\hathat{\eta}_{f,T}$) there exist a hole at a site in $\{x+1, \dots,  u_{1+u_{n-j}-u_n-j+i}\}$ and a particle at a site in $\{u_{n-j}+1+i, \dots, x \}$. Then, the application of $\pi_j^i$ increases the number of holes in $\{ u_{n-j}+1+i, \dots, x\}$ by $1$. Further, $\pi_j^i$ moves a hole to site $u_{n-j}+1+i$ and a particle to site $u_{1+u_{n-j}-u_n-j+i}$. 
	
	Claim 1 follows from Lemma~\ref{lem:permutation_increases_number_of_holes}. In addition, $A$ implies 
	\begin{itemize}
		\item There are at least $n-j-\ell$ holes in $\{u_n+n, \dots, u_{1+j}\}$ for $j=0, \dots, j^*$, where 
		\begin{equation}
		j^* = \begin{cases} n-k-2 &\text{  if  } u_n+n-1 < u_{n-k-1}, \\n-k-3 &\text{  if  } u_n+n-1 = u_{n-k-1}. \end{cases}
		\end{equation}
		\item There are at least $ u_n - u_{n-j} +n-\ell+j$ particles in $\{u_{n-j}+1, \dots, u_n+n-1\}$ for $j=0, \dots, k$.
	\end{itemize}
	
	Combining this with Claim 1, we gain information on the number of holes and particles in certain regions after applying some of our swap operators to $\hathat{\eta}_{f,T}$:
	
	\paragraph{Claim 2:} Let $r \in \{0, \dots, k-1\}$ with $\pi_r \neq \text{id}$, $i \in \{0, \dots, u_{n-r-1}-u_{n-r}-2\}$ and $\ell < n-u_{n-r}+u_n+r-i$. Then, in $(\pi_r^i \dots \pi_0^0)(\hathat{\eta}_{f,T})$, there are 
	\begin{itemize} \item[(i)] $\ell - u_n+u_{n-r}-r+i+1$ holes at sites $\leq u_n+n-1$;
		\item[(ii)] at least $n-j-\ell$ holes in $\{u_n+n, \dots, u_{1+j} \}$ for $j = u_{n-r}-u_n-r+i+1, \dots, j^*$; 
		\item[(iii)] at least $u_n - u_{n-r}+n-\ell+r-i-1$ particles in $\{u_{n-r}+i+2, \dots, u_n+n-1\}$; 
		\item[(iv)] at least $u_n - u_{n-j} +n-\ell+j$ particles in $\{u_{n-j}+1, \dots, u_n+n-1\}$ for $j = r+1, \dots, k$. 
	\end{itemize}
	
	We decompose the proof of Claim 2 as follows: 
	\begin{itemize}
		\item[(a)] (i)--(iii) hold true for $\pi_r^0(\hathat{\eta}_{f,T})$ with $r$ minimal such that $\pi_r \neq \text{id}$.
		\item[(b)] If $\pi_r \neq \text{id}$, $i \in \{1, \dots, u_{n-r-1}-u_{n-r}-2\}$ and (i)--(iii) hold true for $(\pi_r^{i-1} \dots \pi_0^0)(\hathat{\eta}_{f,T})$, they also hold true for $(\pi_r^i \dots \pi_0^0)(\hathat{\eta}_{f,T})$.
		\item[(c)] (iv) holds true for $(\pi_r^i \dots \pi_0^0)(\hathat{\eta}_{f,T})$ with $r$ minimal such that $\pi_r \neq \text{id}$ and with $i \in \{0, \dots, u_{n-r-1}-u_{n-r}-2\}$. 
		\item[(d)] If $r \geq 1$, $\pi_r \neq \text{id}$, $(\pi_{r-1} \dots \pi_0) \neq \text{id}$ and (i)--(iv) hold true for $(\pi_{r-1} \dots \pi_0)(\hathat{\eta}_{f,T})$, then (i)--(iii) hold true for $(\pi_r^0 \dots \pi_0^0)(\hathat{\eta}_{f,T})$. 
		\item[(e)] If $r \geq 1$, $\pi_r \neq \text{id}$, $(\pi_{r-1} \dots \pi_0) \neq \text{id}$ and (i)--(iv) hold true for $(\pi_{r-1} \dots \pi_0)(\hathat{\eta}_{f,T})$, then (iv) holds true for $(\pi_r^i \dots \pi_0^0)(\hathat{\eta}_{f,T})$ with $i \in \{0, \dots, u_{n-r-1}-u_{n-r}-2\}$. 
	\end{itemize}
	
	By iteration, (a)--(e) imply Claim 2. 
	
	\paragraph{(a):} For $\pi_r^0$ with $r$ minimal such that $\pi_r\neq \text{id}$, it holds $u_{n-r}-u_n=r$. Thus, (i)--(iii) from Claim 2 become: if $\ell < n$, in $\pi_r^0(\hathat{\eta}_{f,T})$, there are
	\begin{itemize}
		\item[(i')] $\ell+1$ holes at sites $\leq u_n+n-1$; 
		\item[(ii')] at least $n-j-\ell$ holes in $\{u_n+n, \dots, u_{1+j}\}$ for $j=1, \dots, j^*$; 
		\item[(iii')] at least $n-\ell-1$ particles in $\{u_{n-r}+2, \dots, u_n+n-1\}$. 
	\end{itemize}
	Further, recall that the original permutation $\pi_r^0$ exchanges $u_{n-r}+1$ and $u_1$. 
	
	By $A$ and since $\ell < n$, in $\hathat{\eta}_{f,T}$, there is at least one hole in  $\{u_n+n, \dots, u_1\}$ and there are at least $n-\ell \geq 1$ particles in $\{u_{n-r}+1, \dots, u_n+n-1\}$. By Claim 1, $\pi_r^0$ increases the number of holes at sites $\leq u_n+n-1$ by $1$, moves a hole to $u_{n-r}+1$ and a particle to $u_1$. This implies (i') and (iii'). 
	
	Let $j \in \{1, \dots, j^*\}$. If in $\hathat{\eta}_{f,T}$, there was at least one hole in $\{u_{1+j}+1, \dots, u_1\}$, then in $\pi_r^0(\hathat{\eta}_{f,T})$, the number of holes in $\{u_n+n, \dots, u_{1+j} \}$ remains unchanged, that is, at least $n-j-\ell$. Else, by $A$, there were at least $n-\ell$ holes in $\{u_n+n, \dots, u_{1+j} \}$. By $\pi_r^0$, we remove one hole, meaning that there are still at least $n-\ell-1 \geq n-j-\ell$ holes left in that region. This shows (ii'). 
	
	\paragraph{(b):} We obtain (b) by the same arguments as (a), now using that (i)--(iii) hold true for $(\pi_r^{i-1} \dots \pi_0^0)(\hathat{\eta}_{f,T})$.
	
	\paragraph{(c):} 
	Let $j \in \{r+1, \dots, k \}$ and let $z$ be the number of particles in $\hathat{\eta}_{f,T}$ at the sites $\{u_{n-r}+1, \dots, u_{n-j} \}$. 
	As $\ell < n-u_{n-r}+u_n+r-i$, we know by Claim 1 and (ii),(iii) that each permutation $\pi_r^0, \dots \pi_r^i$ increases the number of holes at sites $\leq u_n+n-1$ by $1$. Further, holes are moved to the sites $u_{n-r}+1, \dots, u_{n-r}+1+i$. This means that there is no particle initially at a site in $\{u_{n-r}+1, \dots, u_{n-j} \}$ that is at or is moved to a site where the subsequent permutations of $\pi_r^0, \dots, \pi_r^i$ in $\pi_r$ cannot reach it any more. In particular, $(\pi_r^i \dots \pi_r^0)$ remove $\text{min}(z,i+1)$ particles from $\{u_{n-r}+1, \dots, u_{n-j} \}$. If $z \geq i+1$, then the number of particles in $\{u_{n-j}+1, \dots, u_n+n-1\}$ remains the same. By $A$, it is at least $u_n - u_{n-j} + n - \ell+j$. On the other hand, if $z < i+1$, then in $\hathat{\eta}_{f,T}$ there were at least $u_n- u_{n-r}+n-\ell+r-z$ particles in $\{u_{n-j}+1, \dots, u_n+n-1\}$. Since $i+1-z$ of them are removed, in $(\pi_r^i \dots \pi_0^0)(\hathat{\eta}_{f,T})$ at least $u_n- u_{n-r}+n-\ell+r-i-1$ of them remain. But $i \leq u_{n-r-1}-u_{n-r}-2$ and $u_{n-j} \geq u_{n-r-1}+j-r-1$ imply $u_n- u_{n-r}+n-\ell+r-i-1 \geq u_n- u_{n-j} +n-\ell+j$. This shows (iv) for $(\pi_r^i \dots \pi_0^0)(\hathat{\eta}_{f,T})$.
	
	\paragraph{(d):} We obtain (d) by similar means as (a) and (b), only that in the input, (iii) is replaced by (iv). 
	
	Notice that $u_{n-r}-u_n-r = u_{n-r+1}-u_n-(r-1)+(u_{n-r}-u_{n-r+1}-2)+1$. Given $\ell < n-u_{n-r}+u_n+r$, in $(\pi_{r-1} \dots \pi_0)(\hathat{\eta}_{f,T})$ there are $\ell-u_n+u_{n-r}-r$ holes at sites $\leq u_n+n-1$, at least $n-u_{n-r}+u_n+r-\ell \geq 1$ holes in $\{u_n+n, \dots, u_{1+u_{n-r}-u_n-r} \}$ and at least $u_n-u_{n-r}+n-\ell+r \geq 1$ particles in $\{u_{n-r}+1, \dots, u_n+n-1\}$. Thus, $\pi_r^0$ increases the number of holes at sites $\leq u_n+n-1$ by $1$ and moves a hole to $u_{n-r}+1$. In $\{u_{n-r}+2, \dots, u_n+n-1\}$, there are still at least $u_n-u_{n-r}+n-\ell+r-1$ particles. This shows (i) and (iii). 
	
	Let $j \in \{u_{n-r}-u_n-r+1, \dots, j^*\}$. If in $(\pi_{r-1} \dots \pi_0)(\hathat{\eta}_{f,T})$ there is a hole in $\{ u_{1+j}+1, \dots, u_{1+u_{n-r}-u_n-r} \}$, then $\pi_r^0$ removes one from these sites and leaves the number of holes in $\{u_n+n, \dots, u_{1+j} \}$ unchanged, that is, at least $n-j-\ell$. Else, there were at least $n-u_{n-r}+u_n+r-\ell \geq n-j-\ell+1$ holes at these sites and $\pi_r^0$ removes only one of them. This implies (ii). 
	
	\paragraph{(e):} Having (b) and (d), (e) can be verified by similar means as (c). 
	
	\ \\ Below, we argue that except for (iv), Claim 2 also holds true for $r=k$ if $\pi_k \neq \text{id}$. In doing so, we need a case distinction to determine the last non-trivial permutation. Afterwards, we specify the maximal number of holes added to sites $\leq u_n+n-1$. 
	
	\paragraph{Case 1: $u_n+n-1 = u_{n-k-1}$.}  Then, we have $\pi_k^{u_n+n-1-u_{n-k}-1} = \text{id}$ and it holds
	$\pi_k = \pi_k^{u_{n-k-1}-u_{n-k}-2} \dots \pi_k^0$. We observe
	\begin{equation}
	\begin{aligned}
	& u_{n-k}+(u_{n-k-1}-u_{n-k}-2)+1 = u_n+n-2 < u_n+n-1, \\ 
	& u_{1+u_{n-k}-u_n-k+(u_{n-k-1}-u_{n-k}-2)} = u_{n-k-2} > u_n+n-1.
	\end{aligned} 
	\end{equation} 
	For $i \leq u_{n-k-1}-u_{n-k}-3$, it holds 
	\[u_{n-k}+i+2 \leq u_n+n-1 \ \text{ and } \ u_{n-k}-u_n-k+i+1 \leq n-k-3 = j^*.\] 
	Therefore, for this choice of $i$ and $\ell <n-u_{n-k}+u_n+k-i$, (i)--(iii) apply for $(\pi_k^i \dots \pi_0^0)(\hathat{\eta}_{f,T})$. For $i = u_{n-k-1}-u_{n-k}-2$, (i) holds true, meaning that $\pi_k^{u_{n-k-1}-u_{n-k}-2}$ still increases the number of holes at sites $\leq u_n+n-1$ if $\ell <n-u_{n-k}+u_n+k-i$.
	
	\paragraph{Case 2: $u_n+n-1 < u_{n-k-1}$.} Then, we have $\pi_k = \pi_k^{u_n+n-1-u_{n-k}-1} \dots \pi_k^0$. Similarly as above, we observe that for $i \leq u_n+n-u_{n-k}-3$ and $\ell <n-u_{n-k}+u_n+k-i$, (i)--(iii) apply for $(\pi_k^i \dots \pi_0^0)(\hathat{\eta}_{f,T})$. For $i = u_n+n-1-u_{n-k}-1$, (i) holds true, meaning that the number of holes at sites $\leq u_n+n-1$ is still increased if $\ell < n - u_{n-k}+u_n+k-i$. \\ 
	
	We have seen that as long as $\ell < n - u_{n-r}+u_n+r-i$, the application of $\pi_r^i \ (\neq \text{id})$ increases the number of holes at sites $\leq u_n+n-1$ by $1$. We claim that there are enough non-trivial permutations such that the number eventually reaches $n$. 
	
	If $u_n+n-1 = u_{n-k-1}$, we find 
	$\sum\nolimits_{r=0}^k (u_{n-r-1}-u_{n-r}-1) = n-k-2$ such permutations. By $A$, it holds $\ell \geq k+2$. 
	
	If $u_n+n-1 < u_{n-k-1}$, the number of non-trivial permutations is given by $ \sum_{r=0}^{k-1} (u_{n-r-1}-u_{n-r}-1) + (u_n+n-1-u_{n-k}) = n-k-1$
	and $A$ implies $\ell \geq k+1$. 
	
	Thus, in both cases, in $\pi (\hathat{\eta}_{f,T}) = \hat{\eta}_{f,T}$ there are at least $n$ holes at sites $\leq u_n+n-1$. This means $A \subseteq E$.  
	
	\paragraph{Step 2: $E \subseteq A$.} We show $A^c \subseteq E^c$. It holds 
	\begin{equation}
	A^c = \bigcup_{j=0}^{n-1} \{ \exists \text{ at most } n-j-1 \text{ numbers } i \leq u_{1+j}: \ \hathat{\eta}_{f,T}(i) > s \}.
	\end{equation}
	First, suppose one of the events occurs for $j \in \{0, \dots, n-k-2 \}$. Since $\pi_r^i$ permutes $\{u_{n-r}+1+i, \dots, u_{1+u_{n-r}-u_n-r+i}\}$, the number of permutations that can add a hole to the sites $\leq u_{1+j}$ is $j$. 
	Thus, in $\pi(\hathat{\eta}_{f,T})$, we find at most $n-j-1+j = n-1$ holes at sites $\leq u_{1+j}$. Since $u_{1+j} \geq u_n+n-1$, this means that $E^c$ occurs. 
	
	Secondly, let $j \in \{n-k-1, \dots, n-1 \}$. 
	
	If $u_{1+j} = u_n+n-j-1$, then the permutations do not add any holes to sites $\leq u_{1+j}$. Else, we find $r$ and $i$ maximal such that $u_{n-r}+1+i < u_{1+j}$. Notice that the definition of our permutations does not allow equality. We obtain $ r \leq n-j-2$ and $i = u_{n-r-1}-u_{n-r}-2$. In particular, $(\pi_r^i \dots \pi_0^0)$ can add at most \[u_{n-r}-u_n-r+i+1 \leq u_{1+j}-u_n-n+j+1\] holes to the sites $\leq u_{1+j}$.
	
	Hence, in $\pi(\hathat{\eta}_{f,T})$, we have at most \[(n-j-1)+(u_{1+j}-u_n-n+j+1) = u_{1+j}-u_n\] holes at sites $\leq u_{1+j}$. Since 
	$|\{u_{1+j}+1, \dots, u_n+n-1 \}| = u_n+n-1-u_{1+j}$, this means there are at most $n-1$ holes at sites $\leq u_n+n-1$. Thus, $E^c$ occurs. 
	
	This concludes the proof of $E=A$. 
\end{proof}

\begin{lem} \label{lem:permutation_increases_number_of_holes} 
	Let $\pi$ denote the swap operators corresponding to a permutation of $\mathbb{Z}$ that exchanges $a$ and $b$ for some $a < b \in \mathbb{Z}$ and maps all other integers to themselves. Let $\eta$ be a configuration of a single-species TASEP on $\mathbb{Z}$, and let $x \in \{a, \dots, b-1\}$. Suppose that in $\eta$, there are a hole at a site in $\{x+1, \dots, b \}$ and a particle at a site in $\{a, \dots, x \}$. Then, in $\pi(\eta)$, 
	\begin{itemize}
		\item[(i)] the number of holes at sites in $\{a, \dots, x \}$ increases by $1$; 
		\item[(ii)] a hole is moved to site $a$; 
		\item[(iii)] a particle is moved to site $b$.
	\end{itemize}
\end{lem}
\begin{proof}
	We can write $\pi = \pi^b \pi^a$ with $\pi^b, \pi^a$ denoting the swap operators corresponding to the transpositions 
	$ (b-1,b) \dots (a+1,a+2) $ respectively $ (a,a+1) \dots (b-1,b)$.
	The permutations are applied from right to left. Notice that $(a,a+1) \dots (b-1,b)$ moves the value $b$ to position $a$ and shifts all values $a, \dots, b-1$ to the right by one step. Applied afterwards, $(b-1,b) \dots (a+1,a+2)$ moves the value $a$ to position $b$ and thereby shifts the values $a+1, \dots, b-1$ back to their original positions. 
	
	By the definitions of swap operators and the representations $\pi^a,\pi^b$, we immediately obtain (ii) and (iii). It remains to prove (i).  
	
	First, suppose that in $\eta$, there is a hole at position $x$. Then, $\pi^a$ moves a hole to site $x+1$ and does not change the number of holes in $\{a, \dots, x \}$. Still, a hole is moved to site $a$. For this reason, $\pi^b$ moves a particle to site $x$. As there is a hole at site $x+1$, they exchange their positions. Consequently, the number of holes in $\{a, \dots, x \}$ increases by $1$. 
	
	Secondly, suppose that in $\eta$, there is a particle at position $x$. Then, $\pi^a$ moves a hole from a site $\geq x+1$ to a site $\leq x$ and thereby increases the number of holes in $\{a, \dots, x \}$ by $1$. The particle previously at site $x$ is now at site $x+1$. As a consequence, $\pi^b$ does not change the number of holes in $\{a, \dots, x \}$ any more. 
\end{proof} 

\section{Proofs of the large-time asymptotics} \label{sect:proofs_large_time} 

This section contains the proofs of Theorem~\ref{th:2.1}, Theorem~\ref{th:2.2}, and Theorem~\ref{th:2.3}. 

\subsection{Particles in the interior of the region affected by the wall} \label{sect:proof_asymptotics_interior} 

Before showing Theorem~\ref{th:2.1}, we derive the variational formula \eqref{eq:remark_1} independently of Theorem~\ref{th:finite_time_identity_non_random_IC}.

We denote by $(x^{\textup{step},Z}(t),t\geq 0)$ a TASEP with a shifted step initial condition whose rightmost particle starts at $Z \in \mathbb{Z}$. By Lemma~2.1 of \cite{Sep98c}, we have
\begin{equation} \label{eq:lemma_2.1_sep98} 
\tilde{x}_n(T) = \min_{j \leq n-1} \{x^{\text{step},x^f_{1+j}}_{n-j}(T) \}
\end{equation}  
almost surely, with all processes being coupled by basic coupling. The formula \eqref{eq:lemma_2.1_sep98} holds true for any initial condition $\{x_n^f, n \in \mathbb{Z} \}$.

\begin{lem} \label{lem:colour_position_symmetry_step} 
	Let $I \subseteq \mathbb{N}$, $\{Z_n\}_{n \in I} \subseteq \mathbb{Z}$ and let $(x^{\textup{step},Z_n}(t),t\geq 0), n \in I,$ be coupled by basic coupling. Then, for any $s \in \mathbb{R}$, it holds
	\begin{equation}
	\mathbb{P} \left( \min_{n \in I} \{ x^{\textup{step},Z_n}_n(T) \} \leq s \right) = \mathbb{P} \left( \min_{n \in I} \{ x^{\textup{step},0}_n(T)+Z_n \} \leq s \right). 
	\end{equation}
\end{lem} 

Lemma~\ref{lem:colour_position_symmetry_step} remains valid for random $\{Z_n\}_{n \in I}$ because the time evolution of the TASEPs is constructed using Poisson processes that are independent of $\{Z_n\}_{n \in I}$. Together, Lemma~\ref{lem:colour_position_symmetry_step} and \eqref{eq:lemma_2.1_sep98} imply \eqref{eq:remark_1}.

\begin{proof}[Proof of Lemma~\ref{lem:colour_position_symmetry_step}]
	Lemma~\ref{lem:colour_position_symmetry_step} is shown by colour-position symmetry as follows.
	
	For $s \in \mathbb{R} \setminus \mathbb{Z}$, we can replace $s$ by $\lfloor s \rfloor$. Since $(x^{\textup{step},Z_n}(t),t \geq 0), n \in I,$ are coupled by basic coupling, we can view all these processes as marginals of the same multi-species TASEP $(\eta_t,t \geq 0)$ with packed initial condition. In doing so, we view colours $\leq Z_n$ as particles and colours $> Z_n$ as holes. Then, colour-position symmetry yields 
	\begin{equation}
	\begin{aligned} 
	&\mathbb{P} \left( \min_{n \in I} \{ x^{\textup{step},Z_n}_n(T) \} \leq s \right)\\
	&= \mathbb{P} \left( \bigcup\nolimits_{n \in I} \{ \exists \text{ at most } n-1 \text{ numbers } i \leq Z_n: \ \text{inv}(\eta_T)(i) > s \} \right)\\
	&= \mathbb{P} \left( \bigcup\nolimits_{n \in I} \{ \exists \text{ at most } n-1 \text{ numbers } i \leq Z_n: \ \hat{\eta}_T(i) > s \} \right).
	\end{aligned} 
	\end{equation}
	Here, $(\hat{\eta}_{t},t \in [0,T])$ is another multi-species TASEP with packed initial condition. We couple it with a single-species TASEP $(x^{\textup{step},0}(t),t\geq 0)$ with step initial condition by viewing colours $>s$ as holes, applying the particle-hole-duality and the change of coordinates $z \mapsto s+1-z$. Then, the probability above equals 
	\begin{equation}
	\begin{aligned}
	&\mathbb{P} \left( \bigcup\nolimits_{n \in I} \{ \exists \text{ at most } n-1 \text{ particles at sites } > s-Z_n \} \right) \\ &= \mathbb{P} \left( \min_{n \in I} \{ x^{\textup{step},0}_n(T)+Z_n \} \leq s \right).
	\end{aligned} 
	\end{equation}
	This concludes the proof.
\end{proof}

\begin{proof}[Proof of Theorem~\ref{th:2.1}]
	Theorem~\ref{th:2.1} follows from Theorem~\ref{th:finite_time_identity_non_random_IC} if 
	\begin{equation} \label{eq:pf_thm_2.1_0}
	\lim_{T \to \infty} \mathbb{P} ( x_{\alpha T-j}(T) > \xi T - S T^{1/3} - x_{1+j}^f \ \forall j \in [\alpha T-1]) = 1.
	\end{equation}
	By Lemma~\ref{lem:colour_position_symmetry_step} and by \eqref{eq:lemma_2.1_sep98}, the left hand side above becomes
	\begin{equation}
	\lim_{T \to \infty} \mathbb{P} \left( \min_{j \in [\alpha T-1]} \{ x^{\text{step}, x_{1+j}^f}_{\alpha T-j}(T) \} > \xi T - ST^{1/3} \right) = \lim_{T \to \infty} \mathbb{P} (\tilde{x}_{\alpha T}(T) > \xi T - S T^{1/3}).  
	\end{equation}
	Since $\tilde{x}_{\alpha T}(T) \simeq g_\alpha T$ with $g_\alpha > \xi$, the limit equals $1$. This yields \eqref{eq:pf_thm_2.1_0}. 
\end{proof}

One could also prove \eqref{eq:pf_thm_2.1_0} using one-point estimates for TASEP with step initial condition, see Lemma~\ref{Lemma_A.2}.

\subsection{Particles on the boundary of the region affected by the wall --- Decoupling}  \label{sect:proof_asymptotics_decoupling} 

\begin{proof}[Proof of Theorem~\ref{th:2.2}] 
	The proof of Theorem~\ref{th:2.2} is two-fold: first, we show that there exists some $\delta > 0$ such that 
	\begin{equation} \label{eq:pf_thm_2.2_0} 
	\lim_{T \to \infty} \mathbb{P} (x_{\alpha T-j}(T) > \xi T - S T^{1/3} - x_{1+j}^f \ \forall j \in [\delta T]) = 1. 
	\end{equation}
	Afterwards, we argue that the fluctuations of the particle positions $(x_{\alpha T}(t), t \in [0,T])$ and $(x_{\alpha T-j}(T), j \in [\alpha T-1] \setminus [\delta T])$ are asymptotically independent. By Theorem~\ref{th:finite_time_identity_non_random_IC}, these facts imply
	\begin{equation}
	\begin{aligned}
	& \lim_{T \to \infty} \mathbb{P}(x^f_{\alpha T}(T) > \xi T - S T^{1/3}) \\
	&= \lim_{T \to \infty} \mathbb{P}(x_{\alpha T}(t) > \xi T - f(T-t)-ST^{1/3} \ \forall t \in [0,T]) \\
	&\hphantom{= } \times \lim_{T \to \infty} \mathbb{P}(x_{\alpha T-j}(T) > \xi T - S T^{1/3} - x_{1+j}^f \ \forall j \in [\alpha T-1] \setminus [\delta T]). 
	\end{aligned}  
	\end{equation}
	By \eqref{eq:lemma_2.1_sep98} and \eqref{eq:pf_thm_2.2_0}, the second limit equals $\lim_{T \to \infty} \mathbb{P} ( \tilde{x}_{\alpha T}(T) > \xi T - S T^{1/3})$. This shows Theorem~\ref{th:2.2}.
	
	\paragraph{Proof of \eqref{eq:pf_thm_2.2_0}.}  We set $\eps = \tfrac{1}{2}((1-2\sqrt{\alpha})-\xi) > 0$. By \eqref{eq:linear_decay_IC}, we find 
	\begin{equation}
	\xi T - x_{1+j}^f \leq (1-2\sqrt{\alpha})T - 2 \eps T + d j \leq (1-2\sqrt{\alpha-jT^{-1}})T + (d-\tfrac{1}{\sqrt{\alpha}})j - 2 \eps T,
	\end{equation}
	up to $\mathcal{O}(1)$. We choose $d > \tfrac{1}{\sqrt{\alpha}}$ and $\delta < \eps (d-\tfrac{1}{\sqrt{\alpha}})^{-1}$ with $\delta < \alpha$. Then, it holds
	\begin{equation} \label{eq:pf_thm_2.2_1} 
	\begin{aligned}
	&\mathbb{P} ( x_{\alpha T-j}(T) \leq \xi T - S T^{1/3} - x_{1+j}^f \text{ for some } j \in [\delta T]) \\ 
	&\leq \mathbb{P} ( x_{\alpha T-j}(T) \leq (1-2\sqrt{\alpha-jT^{-1}})T - \eps T \text{ for some } j \in [\delta T]) \\
	&\leq \sum\nolimits_{j \in [\delta T]}  \mathbb{P} ( x_{\alpha T-j}(T) \leq (1-2\sqrt{\alpha-jT^{-1}})T - \eps T ) \\
	&\leq C T e^{-c T^{2/3}}.
	\end{aligned} 
	\end{equation}
	In the last step, we apply one-point estimates, see Lemma~\ref{Lemma_A.2}. The constants are uniform because $\alpha-jT^{-1} \in [\alpha-\delta,\alpha] \subseteq (0,1)$. In particular, \eqref{eq:pf_thm_2.2_1} implies \eqref{eq:pf_thm_2.2_0}. 
	
	\paragraph{Proof of the asymptotic independence.} Next, we argue that the fluctuations of $(x_{\alpha T}(t), t \in [0,T])$ and $(x_{\alpha T-j}(T), j \in [\alpha T-1]\setminus [\delta T])$ are asymptotically independent. In doing so, we only sketch the arguments as their rigorous implementation is similar to Section~6 of \cite{FG24}. It suffices to consider $x_{(\alpha-\delta)T}(T)$ instead of $(x_{\alpha T-j}(T), j \in [\alpha T-1]\setminus [\delta T])$ because if asymptotic independence of the tagged particle process holds true for the leftmost particle, then it holds true for all of them. 
	
	As seen in \cite{FG24}, we can show asymptotic independence of a tagged particle of events in a deterministic space-time region by localising its backwards path outside of this region with probability converging to $1$. 
	The rightmost backwards path of those starting at $(x_{\alpha T}(t),t \in [0,T])$ is the one of $x_{\alpha T}(T)$. By Proposition~4.2 of \cite{FG24}, for any small $\iota > 0$, it stays to the left of the line $\ell_1(t) = (1-2\sqrt{\alpha})t+T^{2/3+\iota}$ with probability converging to $1$. Therefore, Lemma~3.1 of \cite{FG24} implies that $(x_{\alpha T}(t), t \in [0,T])$ are asymptotically independent of all events to the right of the line $\ell_1(t), t \in [0,T]$.
	
	On the other hand, slow decorrelation, for example Corollary~5.2 of \cite{FG24} at a fixed time, states that the asymptotic fluctuations of $x_{(\alpha-\delta)T}(T)$ are the same as those of $x^{\text{step},x_{(\alpha-\delta)T^\nu}(T^\nu)}_{(\alpha-\delta)(T-T^\nu)}(T^\nu,T)$ in a TASEP with a shifted step initial condition started at time $T^\nu$, for any $\nu \in (0,1)$. The processes are coupled by basic coupling. We choose $\nu \in (\tfrac{2}{3}+\iota,1)$. Given a localisation of $x_{(\alpha-\delta)T^\nu}(T^\nu)$ that will hold with probability converging to $1$, it can be shown as in \cite{FG24} that the asymptotic fluctuations of $x^{\text{step},x_{(\alpha-\delta)T^\nu}(T^\nu)}_{(\alpha-\delta)(T-T^\nu)}(T^\nu,T)$ are independent of all events during $[0,T^\nu]$ and of all events to the left of the line $\ell_2(t) = (1-2\sqrt{\alpha-\delta})t - T^{2/3+\iota}$ during $[T^\nu,T]$. Since for $T$ large enough, we have $\ell_1(t) < \ell_2(t)$ for all $t \in [T^\nu,T]$, this yields the asymptotic independence we were looking for. 
\end{proof}

\subsection{Particles on the boundary of the region affected by the wall --- Interpolation}   \label{sect:proof_asymptotics_interpolation} 

\begin{proof}[Proof of Theorem~\ref{th:2.3}]
	We prove Theorem~\ref{th:2.3} for $\alpha_0 = 1$ in Assumption~\ref{assumpt:2.1.1}. If there are several macroscopic time regions of wall influences, the limit distribution factorizes as proven in \cite{FG24}. Indeed, as the fluctuations of $x_{\alpha T}(t)$ for $t \in [\alpha_i T - \varkappa T^{2/3}, \alpha_i T + \varkappa T^{2/3} ]$ are asymptotically independent of $x_{\alpha T}(t)$ at different macroscopic times $t$, for $\alpha_i < 1$ they are also asymptotically independent of $(x_{\alpha T-j}(T), j \in [\alpha T-1])$. 
	
	In the following, we write $c_1 = c_1^0$, $c_2 = c_2^0$, $g_T = g_T^0$ and $g = g_0$. In particular, we recall $c_1 = (1-\sqrt{\alpha})^{2/3} \alpha^{-1/6}$, $c_2 = 2(1-\sqrt{\alpha})^{1/3}\alpha^{-1/3}$ and $\hat{c}_2  =\alpha c_2$.
	
	We first suppose that $\{x_n^f, n \in \mathbb{N}\}$ fulfils Assumption~\ref{assumpt:2.7} with $d < \tfrac{1}{\sqrt{\alpha}}$. Let $\eps > 0$ be arbitrarily small but fixed. Recalling Theorem~\ref{th:finite_time_identity_non_random_IC}, our first observation is
	\begin{equation} \label{eq:pf_thm_2.3_0}
	\begin{aligned}
	& \lim_{T \to \infty} \mathbb{P}(x^f_{\alpha T}(T) > (1-2\sqrt{\alpha})T-ST^{1/3}) \\
	&= \lim_{T \to \infty} \mathbb{P} (x_{\alpha T}(t) > (1-2\sqrt{\alpha})T - f(T-t) - S T^{1/3} \ \forall t \in [0,T], \\ 
	& \hphantom{=\lim_{T \to \infty} \mathbb{P} (} \ x_{\alpha T-j}(T) > (1-2\sqrt{\alpha})T - S T^{1/3} -x_{1+j}^f \ \forall j \in [T^{1/3+\eps}]).
	\end{aligned}
	\end{equation}
	Indeed, it holds 
	\begin{equation} \label{eq:pf_thm_2.3_0.3} 
	\begin{aligned} 
	x_{1+j}^f =  - dj + c_1 y_T(\hat{c}_2^{-1} j T^{-2/3}) T^{1/3} 
	\geq & - d j - \tfrac{1}{8 \alpha^{3/2}} j^2 T^{-1}  \\
	\geq & - \tfrac{1}{\sqrt{\alpha}} j + 2 \delta j - \tfrac{1}{8 \alpha^{3/2}} j^2 T^{-1} 
	\end{aligned} 
	\end{equation} 
	for $\delta = \tfrac{1}{2}(\tfrac{1}{\sqrt{\alpha}}-d) > 0$, and
	\begin{equation} \label{eq:pf_thm_2.3_0.6} 
	(1-2\sqrt{\alpha-j T^{-1}}) T \geq (1-2\sqrt{\alpha}) T + \tfrac{1}{\sqrt{\alpha}} j  + \tfrac{1}{4 \alpha^{3/2}} j^2 T^{-1} .
	\end{equation}
	For $\eta > 0$ small, this implies
	\begin{equation}
	\begin{aligned}
	& \mathbb{P} (\exists j \in [(\alpha-\eta)T] \setminus [T^{1/3+\eps}]: \ x_{\alpha T-j}(T) \leq (1-2\sqrt{\alpha})T - S T^{1/3} -x_{1+j}^f  ) \\
	& \leq \mathbb{P} (\exists j \in [(\alpha-\eta)T] \setminus [T^{1/3+\eps}]: \ x_{\alpha T-j}(T) \leq (1-2\sqrt{\alpha-jT^{-1}})T - \delta j  ) ,
	\end{aligned} \end{equation} 
	which we bound by
	\begin{equation}
	\sum_{j \in [(\alpha-\eta)T] \setminus [T^{1/3+\eps}]} \mathbb{P} (x_{\alpha T-j}(T) \leq (1-2\sqrt{\alpha-jT^{-1}})T - \delta T^{1/3+\eps} )  \leq C T e^{-c T^{\eps}}.
	\end{equation}
	In the last step, we apply one-point estimates and get uniform constants because $\alpha - j T^{-1} \in [\eta, \alpha] \subseteq (0,1)$. Further, we find
	\begin{equation}
	\begin{aligned}
	& \mathbb{P} (\exists j \in [\alpha T-1] \setminus [(\alpha-\eta)T]: \ x_{\alpha T-j}(T) \leq (1-2\sqrt{\alpha})T - S T^{1/3} -x_{1+j}^f  ) \\
	&\leq \mathbb{P}(x_{\eta T}(T) \leq (1-2\sqrt{\alpha})T-ST^{1/3}-x_{\alpha T}^f).
	\end{aligned} 
	\end{equation}
	Choose $\eta > 0$ such that $(1-2\sqrt{\eta}) > (1-2\sqrt{\alpha})-x_{\alpha T}^fT^{-1}$, then by one-point estimates, the probability is $\leq C e^{-c T^{2/3}}$. We deduce \eqref{eq:pf_thm_2.3_0}.
	
	Thus, adapting Lemma~4.11 -- Lemma~4.14 of \cite{BBF21}, we need to determine the asymptotic probability of the event 
	\begin{equation} \label{eq:pf_thm_2.3_1}
	\begin{aligned} 
	\{  &x_{\alpha T}(T-c_2\tau T^{2/3}) > (1-2\sqrt{\alpha})T - (1-\sqrt{\alpha})c_2 \tau T^{2/3} + c_1(\tau^2-g_T(\tau))T^{1/3} \\ &  - S T^{1/3} \ \forall \tau \in [0,\varkappa] \} \cap \{ x_{\alpha T-j}(T) > (1-2\sqrt{\alpha})T - S T^{1/3} -x_{1+j}^f \ \forall j \in [T^{1/3+\eps}] \},
	\end{aligned} 
	\end{equation}
	where we first take $T \to \infty$ and then $\varkappa \to \infty$. More precisely, 
	\begin{equation} \label{eq:pf_thm_2.3_1.5}
	\begin{aligned}
	& \lim_{T \to \infty} \mathbb{P}(x^f_{\alpha T}(T) > (1-2\sqrt{\alpha})T-ST^{1/3}) = \lim_{\varkappa \to \infty} \lim_{T \to \infty} \mathbb{P} ( \eqref{eq:pf_thm_2.3_1} ). 
	\end{aligned} 
	\end{equation}
	By Theorem~4.7 of \cite{BBF21}, the limit is bounded from above by 
	\begin{equation}
	\mathbb{P} \Bigl ( \sup_{\tau \geq 0} \{ \mathcal{A}_2(\tau)-g(\tau) \} < c_1^{-1} S \Bigr ).
	\end{equation}
	We want to rewrite \eqref{eq:pf_thm_2.3_1} in terms of the process along a space-like path, such that the limit distribution can be determined as in \cite{BF07}. In doing so, we consider coordinates $\omega^1 = \tfrac{1}{2}(t-n)$ and $\omega^0 = \tfrac{1}{2}(t+n)$, where the variables $t$ and $n$ correspond to times respectively labels. We define 
	\begin{equation}
	\pi(\omega^1) = \begin{cases}  \alpha + \omega^1 \ & \text{ if } \  \omega^1 \in [-\tfrac{\alpha}{2}, \tfrac{1-\alpha}{2}], \\
	1-\omega^1 \ & \text{ if } \  \omega^1 \in (\tfrac{1-\alpha}{2},\tfrac{1}{2}], \end{cases} 
	\end{equation}
	see also Figure~\ref{fi:space_like_path}.
	\begin{figure}[t!]
		\centering
		\includegraphics[scale=0.7]{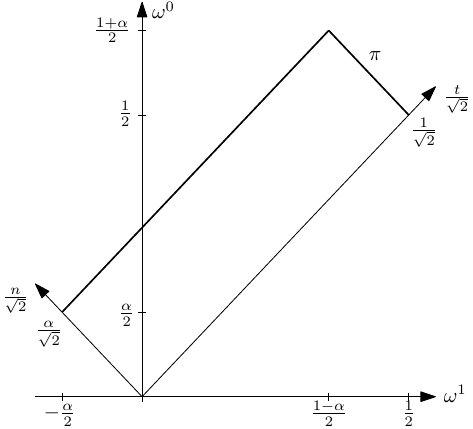}
		\caption{The space-like path $\pi$ in the proof of Theorem~\ref{th:2.3}.}
		\label{fi:space_like_path}
	\end{figure}
	Notice that $\pi$ is \emph{not} a smooth function as required in \cite{BF07}\footnote{In \cite{BF07}, they consider smooth space-like paths and use their first and second derivatives to define the scaling of times and labels around a given point, such that it matches the macroscopic approximation of the particle's position. However, the fact that the parameters are derivatives is not needed in their arguments. Thus, we can find a suitable scaling also for non-smooth paths.}. As the derivative of $\pi$ at $\theta \coloneqq \tfrac{1-\alpha}{2}$ does not exist, we work with its approximations from the left and from the right instead. Following the arguments of \cite{BF07}, for the large-time limit, we consider $\omega^1(u) = \theta T - u T^{2/3}$ with $u \in \mathbb{R}$ and, consequently, 
	\begin{equation}
	\omega^0(u) = \begin{cases}  \tfrac{1+\alpha}{2} T - u T^{2/3} \ & \text{ if } \ u \geq 0, \\ \tfrac{1+\alpha}{2} T + u T^{2/3}  \ & \text{ if } \ u < 0.  \end{cases} 
	\end{equation}
	The scaling for times and labels along the path $\pi$ becomes
	\begin{equation}
	\begin{aligned}
	t(u) = T-2uT^{2/3} , \ n(u) = \alpha T \ & \text{ if } \ u \geq 0, \\ 
	t(u) = T, \ n(u) = \alpha T + 2 u T^{2/3} \ & \text{ if } \ u < 0. 
	\end{aligned} 
	\end{equation}
	Like in (2.19) of \cite{BF07}, we find $x_{n(u)}(t(u)) \simeq \mathbf{x}(u)$ with 
	\begin{equation}
	\mathbf{x}(u) = \begin{cases} (1-2\sqrt{\alpha})T -2(1-\sqrt{\alpha})uT^{2/3}+ \sqrt{\alpha} u^2 T^{1/3} \ & \text{ if } \ u \geq 0, \\ 
	(1-2\sqrt{\alpha})T -2 \tfrac{1}{\sqrt{\alpha}} u T^{2/3} + \tfrac{1}{\alpha^{3/2}} u^2 T^{1/3}  \ & \text{ if } \ u < 0. \end{cases} 
	\end{equation}
	For any $\kappa > 0$, \eqref{eq:pf_thm_2.3_1} contains the event 
	\begin{equation} \label{eq:pf_thm_2.3_2} \begin{aligned} 
	& \{  x_{n(\frac{1}{2}c_2\tau)}(t(\tfrac{1}{2}c_2\tau)) >\mathbf{x}(\tfrac{1}{2}c_2\tau) - c_1g_T(\tau)T^{1/3}  - S T^{1/3} \ \forall \tau \in [0,\varkappa] \} \\ & \cap \{ x_{n(\frac{1}{2}c_2\tau)}(t(\tfrac{1}{2}c_2\tau)) > (1-2\sqrt{\alpha})T - S T^{1/3} -x_{1-c_2 \tau T^{2/3}}^f \ \forall \tau \in [-\kappa,0] \}
	\end{aligned} \end{equation}
	for all $T$ large enough. Recalling \eqref{eq:pf_thm_2.3_1.5}, this means we can bound 
	\begin{equation}
	\lim_{T \to \infty} \mathbb{P}(x^f_{\alpha T}(T) > (1-2\sqrt{\alpha})T-ST^{1/3}) \geq \lim_{\varkappa \to \infty} \lim_{\kappa \to 0} \lim_{T \to \infty} \mathbb{P}(\eqref{eq:pf_thm_2.3_2}). 
	\end{equation}
	By \eqref{eq:pf_thm_2.3_0.3}, we find 
	\begin{equation}
	(1-2\sqrt{\alpha})T - S T^{1/3} - x_{1-c_2 \tau T^{2/3}}^f \leq \mathbf{x}(\tfrac{1}{2}c_2\tau)-\tfrac{1}{2} \tfrac{c_1}{\alpha^2}  \tau^2 T^{1/3}-S T^{1/3}
	\end{equation}
	for $\tau < 0$. Thus, setting 
	\begin{equation}
	\hat{X}_T(u) = \frac{x_{n(u)}(t(u)) - \mathbf{x}(u)}{-T^{1/3}},
	\end{equation}
	\eqref{eq:pf_thm_2.3_2} contains the event 
	\begin{equation} \begin{aligned} 
	&\{ \hat{X}_T(\tfrac{1}{2} c_2 \tau) - c_1 g_T(\tau) < S \ \forall \tau \in [0,\varkappa] \} \cap \{ \hat{X}_T(\tfrac{1}{2} c_2 \tau) - \tfrac{1}{2} \tfrac{c_1}{\alpha^2}\tau^2 < S  \ \forall \tau \in [-\kappa,0]\}.
	\end{aligned} \end{equation}
	As stated in (2.23) of \cite{BF07}, we get 
	\begin{equation} \label{eq:pf_thm_2.3_3} 
	\hat{X}_T(u) \to c_1 \mathcal{A}_2\Bigl(\tfrac{\alpha^{1/3}}{(1-\sqrt{\alpha})^{1/3}} ( \mathbb{1}_{u \geq 0} + \tfrac{1}{\alpha} \mathbb{1}_{u < 0} ) u \Bigr) 
	\end{equation}
	in the sense of finite-dimensional distributions. Indeed, the expression of the finite-dimensional distributions as Fredholm determinants, Proposition~3.1 of \cite{BF07}, holds true for any space-like path. The convergence of the rescaled kernel for step initial condition to the extended Airy kernel $K_{\mathcal{A}_2}$, as sketched in Section~5.2 of \cite{BF07}, transfers to our (non-smooth) choice of $\pi$. We obtain the same representation in terms of functions $f_0, f_1, f_2$ fulfilling (5.55) of \cite{BF07}, only that in our setting,
	\begin{equation}
	\mu = 1-\sqrt{\alpha}, \ \kappa_0 = \tfrac{1}{\sqrt{\alpha}(1-\sqrt{\alpha})}, \ \kappa_1 = \tfrac{1}{1-\sqrt{\alpha}}(\mathbb{1}_{u_i \geq 0} + \tfrac{1}{\alpha} \mathbb{1}_{u_i < 0} ).
	\end{equation}
	Thus, up to conjugation, the rescaled kernel $K^{\text{resc}}(u_1,s_1;u_2,s_2)$ converges to 
	\begin{equation}
	c_1^{-1} K_{\mathcal{A}_2} \Bigl (\tfrac{\alpha^{1/3}}{(1-\sqrt{\alpha})^{1/3}}(\mathbb{1}_{u_1 \geq 0} + \tfrac{1}{\alpha} \mathbb{1}_{u_1 < 0})u_1, c_1^{-1} s_1; \tfrac{\alpha^{1/3}}{(1-\sqrt{\alpha})^{1/3}}(\mathbb{1}_{u_2 \geq 0} + \tfrac{1}{\alpha}\mathbb{1}_{ u_2 < 0})u_2, c_1^{-1} s_2 \Bigr ).
	\end{equation} 
	The approach of \cite{BF07}, which proves convergence by representing a distribution as a Fredholm determinant and showing convergence of the related correlation kernel, is very common and has been used in various contexts. Just to name a few, we refer to \cite{BF07,BFP06,BFS07b,BFS07,FV13,Jo00b,Jo03,Sas05,TW08b}. 
	
	Moreover, by Proposition~2.9 of \cite{BBF21} and Lemma~\ref{lem:weak_conv}, $(\hat{X}_T(u))$ is tight on compact intervals when restricted to $u \geq 0$ or $u \leq 0$. This implies tightness of $(\hat{X}_T(u))$ on compact intervals as well, meaning that \eqref{eq:pf_thm_2.3_3} holds true as weak convergence with respect to the uniform topology on compact sets. 
	Similarly as in the proof of Theorem~2.5 of \cite{FG24}, we obtain 
	\begin{equation} \label{eq:pf_thm_2.3_4} \begin{aligned} 
	& \lim_{T \to \infty} \mathbb{P}( \hat{X}_T(\tfrac{1}{2} c_2 \tau) - c_1 g_T(\tau) < S \ \forall \tau \in [0,\varkappa] , \ \hat{X}_T(\tfrac{1}{2} c_2 \tau) - \tfrac{1}{2}\tfrac{c_1}{\alpha^2}\tau^2 < S \ \forall \tau \in [-\kappa,0] ) \\
	&= \mathbb{P} \Bigl ( \sup_{\tau \in [0,\varkappa]} \{\mathcal{A}_2(\tau) - g(\tau) \} < c_1^{-1} S, \ \sup_{\tau \in [-\kappa,0]} \{\mathcal{A}_2(\tfrac{\tau}{\alpha}) - \tfrac{1}{2}(\tfrac{\tau}{\alpha})^2 \} < c_1^{-1} S \Bigr ).
	\end{aligned} \end{equation}
	Next, we let $\kappa \to 0$ and observe that $\{ \sup_{\tau \in [-\kappa,0]} \{\mathcal{A}_2(\tfrac{\tau}{\alpha}) -  \tfrac{1}{2} (\tfrac{\tau}{\alpha})^2 \} < c_1^{-1} S \}$ is a monotonically increasing sequence of events. Conditioning our probability space on $\mathcal{A}_2$ having continuous sample paths, the sequence converges to $\{\mathcal{A}_2(0) < c_1^{-1} S \}$ as $\kappa \to 0$. Since $f(0) = 0$ implies $g(0)=0$, $\sup_{\tau \in [0,\varkappa]} \{\mathcal{A}_2(\tau) - g(\tau) \} < c_1^{-1} S$ implies $\mathcal{A}_2(0) < c_1^{-1} S$, meaning 
	\begin{equation}
	\lim_{\kappa \to 0} \mathbb{P} \Bigl ( \sup_{\tau \in [-\kappa,0]} \{\mathcal{A}_2(\tfrac{\tau}{\alpha}) - \tfrac{1}{2}(\tfrac{\tau}{\alpha})^2 \} < c_1^{-1} S  \ \Bigl | \  \sup_{\tau \in [0,\varkappa]} \{\mathcal{A}_2(\tau) - g(\tau) \} < c_1^{-1} S \Bigr ) = 1.
	\end{equation}
	Thus, again by similar arguments as in the proof of Theorem~2.5 of \cite{FG24}, our limit probability is bounded from below by 
	\begin{equation}
	\eqref{eq:pf_thm_2.3_1.5} \geq \lim_{\varkappa \to \infty} \lim_{\kappa \to0} \eqref{eq:pf_thm_2.3_4} = \mathbb{P} \Bigl ( \sup_{\tau \geq 0} \{ \mathcal{A}_2(\tau)-g(\tau) \} < c_1^{-1} S \Bigr ).
	\end{equation}
	We conclude 
	\begin{equation}
	\eqref{eq:pf_thm_2.3_1.5} = \mathbb{P} \Bigl ( \sup_{\tau \geq 0} \{ \mathcal{A}_2(\tau)-g(\tau) \} < c_1^{-1} S \Bigr )
	\end{equation}
	for $d < \tfrac{1}{\sqrt{\alpha}}$.
	
	Now, let $d = \tfrac{1}{\sqrt{\alpha}}$. We claim 
	\begin{equation} \label{eq:pf_thm_2.3_5}
	\begin{aligned}
	& \lim_{T \to \infty} \mathbb{P}(x^f_{\alpha T}(T) > (1-2\sqrt{\alpha})T - S T^{1/3}) \\ 
	&= \lim_{\varkappa \to \infty} \lim_{T \to \infty} \mathbb{P}(x_{\alpha T}(t) > (1-2\sqrt{\alpha})T-f(T-t)-ST^{1/3} \ \forall t \in [T-c_2\varkappa T^{2/3},T], \\ 
	& \hphantom{= \lim_{\varkappa \to \infty} \lim_{T \to \infty} \mathbb{P}(} \ x_{\alpha T-j}(T) > (1-2\sqrt{\alpha})T-x_{1+j}^f - S T^{1/3} \ \forall j \in [\hat{c}_2 \varkappa T^{2/3}] ).
	\end{aligned} 
	\end{equation}
	Indeed, \eqref{eq:pf_thm_2.3_5} can be obtained by combining Lemma~4.11 -- Lemma~4.14 of \cite{BBF21} and Corollary~\ref{cor:6.3}. 
	
	By similar arguments as above, the event in the second probability now equals 
	\begin{equation} \label{eq:pf_thm_2.3_6} \{ \hat{X}_T(\tfrac{1}{2}c_2\tau) - c_1 g_T(\tau) < S \ \forall \tau \in [0,\varkappa], \hat{X}_T(\tfrac{1}{2}\hat{c}_2\tau)-c_1\tau^2 - c_1 y_T(- \tau) < S \ \forall \tau \in [-\varkappa,0] \}.
	\end{equation} 
	Since $y_T \to y$ uniformly on compact sets, the weak convergence \eqref{eq:pf_thm_2.3_3} and Lemma~6.6 of \cite{FG24} applied to $\tau \mapsto \tau^2 + y(-\tau)$ yield
	\begin{equation} \begin{aligned} 
	& \lim_{T \to \infty} \mathbb{P} (\eqref{eq:pf_thm_2.3_6}) \\
	&= \mathbb{P} \Bigl ( \sup_{\tau \in [0,\varkappa]} \{ \mathcal{A}_2(\tau)-g(\tau) \} < c_1^{-1} S, \ \sup_{\tau \in [-\varkappa,0]} \{ \mathcal{A}_2(\tau)-\tau^2 - y (- \tau)\} < c_1^{-1} S \Bigr ).
	\end{aligned} \end{equation}
	Since $\tau^2 + y(-\tau) \geq \tfrac{1}{2} \tau^2$, we conclude by similar means as in \cite{FG24}, see Proposition~4.4 of \cite{CH11} and Proposition~2.13(b) of \cite{CLW16}, that \eqref{eq:pf_thm_2.3_5} equals
	\begin{equation}
	\mathbb{P} \Bigl ( \sup_{\tau \geq 0} \{ \mathcal{A}_2(\tau)-g(\tau) \} < c_1^{-1} S, \ \sup_{\tau  \leq 0} \{ \mathcal{A}_2(\tau)-\tau^2 -  y (- \tau) \} < c_1^{-1} S \Bigr ).
	\end{equation}
\end{proof}

\section{Product limit distributions and shocks}  \label{sect:proof_density_shock}

\begin{proof}[Proof of Lemma~\ref{lemma_density_wall_influence_times}] Without loss of generality, suppose $g_i(0) < + \infty$ in Assumption~\ref{assumpt:2.1.1}. Then, it holds
	\begin{equation} \label{eq:pf_lem_density_wall_influence_times_0} 
	f((1-\alpha_i)T) = \xi T - \Bigl ( 1-2\sqrt{\tfrac{\alpha}{\alpha_i}} \Bigr ) \alpha_i T + \mathcal{O}(T^{1/3})
	\end{equation}
	for $i =0, \dots, n$. For any sequence $\eta \downarrow 0$, we have $x^f_{(\alpha-\eta)T}(T) \simeq \xi_\eta T$ only if 
	\begin{equation} \label{eq:pf_lem_density_wall_influence_times_1} 
	\Bigl ( 1 - 2 \sqrt{ \tfrac{\alpha-\eta}{\alpha_i}} \Bigr ) \alpha_i T \geq \xi_\eta T - f((1-\alpha_i)T) + \mathcal{O}(T^{1/3}). 
	\end{equation}
	Expanding the left hand side in $\eta=0$, \eqref{eq:pf_lem_density_wall_influence_times_0} and \eqref{eq:pf_lem_density_wall_influence_times_1} imply 
	$\xi_\eta \leq \xi + \sqrt{\tfrac{\alpha_i}{\alpha}} \eta + \mathcal{O}(\eta^2)$. Therefore, we find
	\begin{equation} \label{eq:pf_lem_density_wall_influence_times_1.5}
	\rho^f_r(\xi) = \liminf_{\eta \downarrow 0} \frac{\eta }{\xi_\eta  - \xi } \geq \liminf_{\eta \downarrow 0} \frac{\eta}{\sqrt{\tfrac{\alpha_i}{\alpha}} \eta + \mathcal{O}(\eta^2)} = \sqrt{\tfrac{\alpha}{\alpha_i}}
	\end{equation}
	for $i=0, \dots, n$.
	
	Next, suppose 
	$ \limsup_{\eta \downarrow 0} \tfrac{\eta}{\xi_\eta-\xi} \geq \sqrt{\alpha \alpha_0^{-1}} + 2 \sigma $
	for some $\sigma > 0$. Then, along some sequence $\eta \downarrow 0$, it holds $ \xi_\eta \leq \xi + (\sqrt{\alpha \alpha_0^{-1}} + \sigma)^{-1} \eta $ for all $\eta$ small enough. We claim that, as a consequence, we see no wall influence on $x^f_{(\alpha-\eta)T}(T)$, which is contradictory. Indeed, since $x^f_{\alpha T}(T) \simeq \xi T$, we obtain 
	\begin{equation}
	\xi_\eta T - f((1-\beta)T) \leq \Bigl ( 1- 2 \sqrt{\tfrac{\alpha-\eta}{\beta}} \Bigr ) \beta T - \sqrt{\tfrac{\beta}{\alpha}} \eta T + \Bigl (\sqrt{\tfrac{\alpha}{ \alpha_0}} + \sigma \Bigr )^{-1} \eta  T + \mathcal{O}(T^{1/3})
	\end{equation}
	for $\beta \in [\alpha,1]$.
	Thus, a wall influence at time $\beta T$ can only happen if $\beta < \alpha_0$.
	Further, Assumption~\ref{assumpt:2.1.1} (a) for $x^f_{\alpha T}(T)$ implies for all $\eta$ small enough:   
	\begin{equation} \begin{aligned} 
	\xi_\eta T - f((1-\beta)T) \leq \begin{cases} -(\alpha-\eta)T-\tfrac{1}{2}K(\eps) T & \text{ if } \beta\in [0,\alpha-\eta], \\  \Bigl ( 1- 2 \sqrt{\tfrac{\alpha-\eta}{\beta}} \Bigr ) \beta T  - \tfrac{1}{2} K(\eps)T & \text{ if } \beta \in [\alpha-\eta,\alpha_0-\eps). \end{cases} 
	\end{aligned} \end{equation} 
	If $\beta = \alpha_0-\delta$ with $\delta \in (0,\eps]$, we know by Assumption~\ref{assumpt:2.1.1} (b) that up to $\mathcal{O}(T^{1/3})$,
	\begin{equation}
	\xi_\eta T - f((1-\beta)T) \leq  \Bigl (1-2\sqrt{\tfrac{\alpha}{\alpha_0}} \Bigr )\alpha_0 T + \Bigl (\sqrt{\tfrac{\alpha}{ \alpha_0}} + \sigma \Bigr )^{-1} \eta  T - \Bigl(1-\sqrt{\tfrac{\alpha}{\alpha_0}}\Bigr ) \delta T + \tfrac{\sqrt{\alpha}}{8 \alpha_0^{3/2}} \delta^2 T. 
	\end{equation}
	Since we have
	\begin{equation}
	\Bigl ( 1- 2 \sqrt{\tfrac{\alpha-\eta}{\beta}} \Bigr ) \beta T  \geq \Bigl (1-2\sqrt{\tfrac{\alpha}{\alpha_0}} \Bigr ) \alpha_0 T - \Bigl (  1 - \sqrt{\tfrac{\alpha}{\alpha_0}} \Bigr ) \delta T + \tfrac{\sqrt{\alpha}}{4 \alpha_0^{3/2}} \delta^2 T + \sqrt{\tfrac{\alpha_0-\delta}{\alpha}} \eta T,
	\end{equation}
	this leads to
	\begin{equation}
	\Bigl ( 1- 2 \sqrt{\tfrac{\alpha-\eta}{\beta}} \Bigr ) \beta T - (\xi_\eta T - f((1-\beta)T)) \geq \tfrac{\sqrt{\alpha}}{8 \alpha_0^{3/2}} \delta^2 T - \tfrac{1}{ \sqrt{\alpha \alpha_0}} \delta \eta T + 2 c \eta T 
	\end{equation}
	for some $c > 0$ depending on $\sigma$. We choose $\eta$ small such that $c \eta \geq \tfrac{2\sqrt{\alpha_0}}{\alpha^{3/2}} \eta^2$. Then, the right hand side is larger than 
	$\tfrac{\sqrt{\alpha}}{8 \alpha_0^{3/2}} (\delta - 4 \tfrac{\alpha_0}{\alpha} \eta)^2 T + c \eta T \geq c \eta T$. 
	We conclude that $\xi_\eta T - f(T-t)$ is macroscopically smaller than the law of large numbers of $x_{(\alpha-\eta)T}(t)$ for all $t \in [0,T]$. This means that the wall has no influence on $x^f_{(\alpha-\eta)T}(T)$, which is contradictory. Thus, we obtain 
	\begin{equation}
	\lim_{\eta \downarrow 0} \frac{\eta }{\xi_\eta  - \xi } = \sqrt{\tfrac{\alpha}{\alpha_0}}.
	\end{equation}
	
	Next, we consider $\rho^f_l(\xi)$ and write $x^f_{(\alpha+\eta)T}(T) \simeq \xi_\eta T$. Analogously to \eqref{eq:pf_lem_density_wall_influence_times_1.5}, we find $\rho^f_l(\xi) \leq \sqrt{\alpha \alpha_n^{-1}}$. 
	We let $x^f_{\alpha T}(T)$ be in the interior of the region affected by the wall, meaning $x^f_{\beta T}(T)$ is also affected for $\beta > \alpha$ close to $\alpha$. Further, we suppose $\liminf_{\eta \downarrow 0}  \tfrac{\eta}{\xi-\xi_\eta} \leq \sqrt{ \tfrac{\alpha}{\alpha_n}} - 2 \sigma$ for some $\sigma > 0$. Then, for all $\eta$ small enough along some sequence $\eta \downarrow 0$, it holds $\xi_\eta \leq \xi - (\sqrt{\tfrac{\alpha}{\alpha_n}}-\sigma)^{-1} \eta$ and $x^f_{(\alpha+\eta)T}(T)$ experiences a wall influence. By similar arguments as before, a wall influence at time $\beta T$ is only possible if $\beta \in (\alpha_n,1]$. 
	In particular, for $\alpha_n = 1$, it cannot happen, in contradiction to our assumption. 
	
	We again use Assumption~\ref{assumpt:2.1.1} for $x^f_{\alpha T}(T)$ and notice that for $\beta \in (\alpha_n+\eps,1]$, it holds 
	\begin{equation}
	\xi_\eta T - f((1-\beta)T) \leq \left ( 1-2\sqrt{\tfrac{\alpha+\eta}{\beta}} \right ) \beta T - \tfrac{1}{2} K(\eps)T 
	\end{equation}
	for $\eta$ small enough. Suppose $\eps \leq \alpha_n$ and let $\beta = \alpha_n+\delta$ for $\delta \in (0,\eps]$. 
	Then, we obtain
	\begin{equation}
	\xi_\eta T - f((1-\beta)T) \leq \left (1-2\sqrt{\tfrac{\alpha+\eta}{\beta}} \right) \beta T + \sqrt{\tfrac{\alpha_n+\delta}{\alpha}} \eta T - \left(\sqrt{\tfrac{\alpha}{\alpha_n}}-\sigma\right)^{-1} \eta T- \tfrac{\sqrt{\alpha}}{24 \alpha_n^{3/2}}\delta^2 T,
	\end{equation}
	up to $\mathcal{O}(T^{1/3})$. For some constant $c > 0$, we find 
	\begin{equation}
	\begin{aligned}
	&\left(\left(\sqrt{\tfrac{\alpha}{\alpha_n}}-\sigma\right)^{-1} - \sqrt{\tfrac{\alpha_n+\delta}{\alpha}}  \right)\eta T +  \tfrac{\sqrt{\alpha}}{24 \alpha_n^{3/2}}\delta^2 T\geq (2 c  - \tfrac{1}{2\sqrt{\alpha\alpha_n}} \delta) \eta  T+\tfrac{\sqrt{\alpha}}{24 \alpha_n^{3/2}}\delta^2 T.
	\end{aligned} 
	\end{equation}
	If we suppose $\eta \leq \tfrac{2 \alpha^{3/2}}{3 \sqrt{\alpha_n}}c$, then the right hand side is $\geq c \eta T +  \tfrac{\sqrt{\alpha}}{24 \alpha_n^{3/2}}(\delta-6 \tfrac{\alpha_n}{\alpha}\eta)^2 T$. 
	We conclude that $x^f_{(\alpha+\eta)T}(T)$ cannot be influenced by the wall, which is contradictory. Thus, we obtain $\rho^f_l(\xi) = \sqrt{\alpha \alpha_n^{-1}}$. 
\end{proof}

\section{Weak convergence and further auxiliary results} \label{sect:auxiliary results}

In this section, we prove auxiliary results for the proofs of Theorem~\ref{th:2.3}, Proposition~\ref{pro:limit_thm_2.2}, and Corollary~\ref{cor:prop_2.2}. The first one is a weak convergence result for the process of rescaled particle positions at a fixed time in a TASEP with step initial condition.

For $(x(t),t \geq 0)$ denoting a TASEP with step initial condition, we define 
\begin{equation}
X_T(\tau) = \frac{x_{\alpha T + \hat{c}_2 \tau T^{2/3}}(T) - ((1-2\sqrt{\alpha})T - \tfrac{1}{\sqrt{\alpha}} \hat{c}_2 \tau T^{2/3} )}{-c_1 T^{1/3}},
\end{equation}
where $c_1 = (1-\sqrt{\alpha})^{2/3}\alpha^{-1/6} $ and $\hat{c}_2 = 2 \alpha^{2/3} (1-\sqrt{\alpha})^{1/3}$. 

\begin{lem} \label{lem:weak_conv} 
	It holds 
	\begin{equation} (X_T(\tau)) \Rightarrow (\mathcal{A}_2(\tau)-\tau^2) 
	\end{equation} 
	weakly with respect to the uniform topology on compact sets. 
\end{lem}

Having Lemma~\ref{lem:weak_conv}, one can readily prove a functional slow decorrelation result for $(X_T(\tau))$ by the same means as in \cite{FG24}. Together, weak convergence and functional slow decorrelation constitute the \qq{counterparts} of Proposition~2.9 of \cite{BBF21} and Proposition~5.1 of \cite{FG24}, which provide the corresponding statements for the tagged particle process. However, since it is not relevant to this study, we exclude the slow decorrelation here. 

For the weak convergence results, a key ingredient is the comparison to a stationary TASEP. For both $(X_T(\tau))$ and the tagged particle process, this comparison is established in Section~3.2 of \cite{FG24}. 

\begin{proof}[Proof of Lemma~\ref{lem:weak_conv}] 
	The claimed convergence is well-known for the finite-dimensional distributions, see \cite{BF07} for a more general framework of particle positions, and has been demonstrated in the weak sense for related models in \cite{FO17,Jo03b}. We transfer their approach to our setting and show tightness of $(X_T(\tau))$ using the criteria by Billingsley \cite{Bil68}. Together with the convergence of the finite-dimensional distributions, it implies weak convergence with respect to the uniform topology on compact sets, see Theorem~15.1 of \cite{Bil68}.
	
	Without loss of generality, we consider the interval $[0,\varkappa]$. Clearly, $X_T(0)$ is tight. We define the modulus of continuity 
	\begin{equation}
	w_T(\delta) \coloneqq \sup_{\tau_2 < \tau_1 \in [0,\varkappa], |\tau_1-\tau_2| \leq \delta} |X_T(\tau_1)-X_T(\tau_2)|
	\end{equation}
	and let $\eps, \eta > 0$. We claim that we find some $\delta > 0$ such that for $T$ large enough, 
	\begin{equation}
	\mathbb{P}(w_T(\delta) > \eps) < \eta.
	\end{equation}
	This can be proven by a comparison to particle distances in a stationary TASEP, see Lemma~3.14 and Proposition~3.16 of \cite{FG24}. More precisely, we bound the particle distances in the TASEP with step initial condition from above and from below by those in two stationary TASEPs with suitable densities. Then, we split the domain up into small subintervals. In the stationary case, the particle distances can be written as sums of geometric random variables. The resulting expression can be bounded by Doob's submartingale inequality, which provides an exponential decay of the probability as $\delta \downarrow 0$. We refer to Lemma~\ref{lem:appendix_modulus_continuity} for the detailed computations.
\end{proof} 

Using Lemma~\ref{lem:weak_conv}, we can prove Proposition~\ref{pro:limit_thm_2.2} (and Corollary~\ref{cor:prop_2.2}) by narrowing down the region of labels in \eqref{eq:lemma_2.1_sep98} that contribute non-trivially to the limiting distribution. This is the content of Lemma~\ref{lem:6.3} below. A similar argument is needed in the proof of Theorem~\ref{th:2.3}, see Corollary~\ref{cor:6.3}. 

Below, we shift the labels of the initial conditions fulfilling Assumption~\ref{assumpt:2.9} or the setting of Corollary~\ref{cor:prop_2.2} by $1$ in order to match the notation $x_{1+j}^f$ from \eqref{eq:lemma_2.1_sep98}. This does not affect the asymptotics. 

\begin{lem} \label{lem:6.3} 
	Suppose Assumption~\ref{assumpt:2.9} is fulfilled and define $ J_\varkappa \coloneqq \{(\alpha - d^{-2})T - \varkappa T^{2/3}, \dots, (\alpha-d^{-2})T + \varkappa T^{2/3}\}$.
	Then, it holds 
	\begin{equation} \label{eq:lem_6.3_0} 
	\lim_{\varkappa \to \infty}  \lim_{T \to \infty} \mathbb{P}(x_{\alpha T-j}(T) > (1-d^{-1}-d\alpha)T-ST^{1/3}-x_{1+j}^f \ \forall j \in [\alpha T-1] \setminus J_\varkappa) = 1.
	\end{equation}
\end{lem} 

The convergence \eqref{eq:lem_6.3_0} is the \qq{counterpart} of Lemma~4.11 -- Lemma~4.14 of \cite{BBF21}, which treat the tagged particle process instead of the process with varying labels. In the regions of moderate deviation from $(\alpha-d^{-2})T$, we again require a comparison to a stationary TASEP. 

\begin{proof}
	By Assumption~\ref{assumpt:2.9}, we have 
	\begin{equation}
	x_{1+j}^f \geq - d j - \tfrac{d^3}{2(1+\sqrt{\alpha}d)^2} (j-(\alpha-d^{-2})T)^2 T^{-1}.
	\end{equation}
	We set $I_\varkappa \coloneqq \{ -(\alpha-d^{-2})T, \dots, - \varkappa T^{2/3}-1 \} \cup \{ \varkappa T^{2/3}+1, \dots, d^{-2}T -1 \}$ and write
	\begin{equation}
	\begin{aligned}
	& \mathbb{P}(x_{\alpha T-j}(T) > (1-d^{-1}-d\alpha)T-ST^{1/3}-x_{1+j}^f \ \forall j \in [\alpha T-1] \setminus J_\varkappa) \\
	&= \mathbb{P}(x_{d^{-2}T-k}(T) > (1-d^{-1}-d\alpha)T-ST^{1/3}-x_{1+(\alpha-d^{-2})T+k}^f \ \forall k \in I_\varkappa).
	\end{aligned} 
	\end{equation}
	The right hand side in the probability is smaller or equal to 
	$(1-2d^{-1})T- S T^{1/3}+dk + \tfrac{d^3}{2(1+\sqrt{\alpha}d)^2}k^2 T^{-1}$. Further, for $d \geq \tfrac{1}{\sqrt{\alpha}}$ and $-(\alpha-d^{-2}) T \leq k \leq d^{-2}T-1$, it holds 
	\begin{equation}
	(1-2\sqrt{d^{-2}-kT^{-1}})T \geq (1-2d^{-1})T+dk+\tfrac{d^3}{(1+\sqrt{\alpha}d)^2}k^2 T^{-1}. 
	\end{equation}
	Thus, we obtain
	\begin{equation}
	\begin{aligned} 
	&(1-d^{-1}-d\alpha)T-ST^{1/3}-x_{1+(\alpha-d^{-2})T+k}^f \\
	& \leq (1-2\sqrt{d^{-2}-kT^{-1}})T - S T^{1/3} - \tfrac{d^3}{2(1+\sqrt{\alpha}d)^2}k^2 T^{-1}.
	\end{aligned} 
	\end{equation}
	We deduce, for $\iota \in (0,\tfrac{1}{3})$ and $\eta > 0$ small such that $1-2\sqrt{\eta} > 1-d^{-1} + \tfrac{1}{2d(1+\sqrt{\alpha}d)^2}$: 
	\begin{equation}
	\begin{aligned}
	&\mathbb{P}(\exists k \in I_\varkappa, |k| \geq \varkappa T^{2/3+\iota}: \ x_{d^{-2}T-k}(T) \leq (1-d^{-1}-d\alpha)T-ST^{1/3}-x^f_{1+(\alpha-d^{-2})T+k}) 
	\\ 
	&\leq \mathbb{P} ( x_{\eta T}(T) \leq  (1-2d^{-1})T- S T^{1/3}+d^{-1}T + \tfrac{d^3}{2(1+\sqrt{\alpha}d)^2}d^{-4} T ) \\
	&\hphantom{\leq } + \sum_{k \in I_\varkappa^{\eta,\iota}} 
	\mathbb{P} \bigl( x_{d^{-2}T-k}(T) \leq (1-2\sqrt{d^{-2}-kT^{-1}})T-ST^{1/3} - \tfrac{d^3}{2(1+\sqrt{\alpha}d)^2} \varkappa^2 T^{1/3+2\iota} \bigr) \\
	&\leq C e^{-c T^{2/3}} + C T e^{-c \varkappa^2 T^{2\iota}},
	\end{aligned} 
	\end{equation}
	where $I_\varkappa^{\eta,\iota} = \{-(\alpha-d^{-2})T, \dots, -\varkappa T^{2/3+\iota} \} \cup \{ \varkappa T^{2/3+\iota}, \dots, (d^{-2}-\eta)T\}$.  
	In the last step, we applied one-point estimates, see Lemma~\ref{Lemma_A.2}. The constants $C,c>0$ are uniform because $d^{-2}-k T^{-1} \in [\eta, \alpha] \subseteq (0,1)$ for $k \in I_\varkappa^{\eta,\iota}$. It remains to bound 
	\begin{equation}
	\begin{aligned}
	& \mathbb{P}(\exists k \in I_\varkappa, |k| \leq \varkappa T^{2/3+\iota}: \ x_{d^{-2}T-k}(T) \leq (1-d^{-1}-d\alpha)T-ST^{1/3}-x^f_{1+(\alpha-d^{-2})T+k}) \\
	&\leq \mathbb{P}(\exists k \in I_\varkappa, |k| \leq \varkappa T^{2/3+\iota}: \ x_{d^{-2}T-k}(T) \leq (1-2\sqrt{d^{-2}-kT^{-1}})T -  \tfrac{d^3k^2}{4(1+\sqrt{\alpha}d)^2} T^{-1})
	\end{aligned} 
	\end{equation}
	by a term that converges to zero in the double limit.
	Indeed, for large $\varkappa$, the second probability can be bounded by $Ce^{-c\varkappa}$ by using the comparison to a stationary TASEP, as done in the proof of Theorem~4.3 of \cite{FG24}. We use the same comparison results like for Lemma~\ref{lem:appendix_modulus_continuity}. The idea is to divide $\{\varkappa T^{2/3}+1,\dots, \varkappa T^{2/3+\iota}\}$ and $\{- \varkappa T^{2/3+\iota}, \dots,- \varkappa T^{2/3}-1\}$ into subsets of size $\varkappa T^{2/3}$, to control the particles with labels at their boundaries by the one-point estimates and to bound the particle distances for labels inside the subsets by those in a stationary TASEP with suitable density. Considering the stationary process, we again rewrite the particle distances as sums of geometric random variables and apply Doob's submartingale inequality. For the detailed computations, we refer to Lemma~\ref{lem:A.1}. 
	
	Taking first $T \to \infty$ and then $\varkappa \to \infty$, we conclude \eqref{eq:lem_6.3_0}.  
\end{proof}

\begin{cor} \label{cor:6.3} 
	For $d = \tfrac{1}{\sqrt{\alpha}}$, Assumption~\ref{assumpt:2.7} is consistent with  Assumption~\ref{assumpt:2.9}, and \eqref{eq:lem_6.3_0} remains valid for $[ \varkappa T^{2/3}]$ instead of $J_\varkappa$. This means that 
	\begin{equation}
	\lim_{\varkappa \to \infty} \lim_{T \to \infty} \mathbb{P}(x_{\alpha T-j}(T) > (1-2\sqrt{\alpha})T-x_{1+j}^f-ST^{1/3} \ \forall j \in [\alpha T-1] \setminus [\varkappa T^{2/3}]) = 1,
	\end{equation}
	as used in the proof of Theorem~\ref{th:2.3}. 
\end{cor} 

We conclude the section by proving Proposition~\ref{pro:limit_thm_2.2} and Corollary~\ref{cor:prop_2.2}. 

\begin{proof}[Proof of Proposition~\ref{pro:limit_thm_2.2}] 
	We recall that \eqref{eq:lemma_2.1_sep98} and Lemma~\ref{lem:colour_position_symmetry_step} imply
	\begin{equation} \label{eq:pf_prop_limit_thm_2.2_0} 
	\tilde{x}_{\alpha T}(T) \overset{(d)}{=} \min_{j \in [\alpha T-1]}\{x_{\alpha T-j}(T)+x_{1+j}^f \}.
	\end{equation}
	Thus, we want to compute
	\begin{equation} \label{eq:pf_prop_limit_thm_2.2_0.3}
	\lim_{T \to \infty} \mathbb{P}(x_{\alpha T-j}(T) > g_\alpha T-ST^{1/3}-x_{1+j}^f \ \forall j \in [\alpha T-1])
	\end{equation}
	for a suitable choice of $g_\alpha$. 
	
	First, let Assumption~\ref{assumpt:2.9} be fulfilled. For $g_\alpha = 1-d^{-1}-d\alpha$, Lemma~\ref{lem:6.3} tells us that in order to determine \eqref{eq:pf_prop_limit_thm_2.2_0.3}, we only need to consider the region $j \in J_\varkappa$ and to take $\varkappa \to \infty$ afterwards. We write
	\begin{equation}
	\begin{aligned} 
	& \mathbb{P}(x_{\alpha T-j}(T) > (1-d^{-1}-d\alpha)T-ST^{1/3}-x_{1+j}^f \ \forall j \in J_\varkappa ) \\
	&= \mathbb{P} \left( \sup_{\tau \in [-\hat{c}_2^{-1} \varkappa, \hat{c}_2^{-1} \varkappa]} \{ X_T(\tau) - y_T(-\tau) \} < c_1^{-1} S \right),
	\end{aligned} 
	\end{equation}
	where 
	\begin{equation} 
	X_T(\tau) = \frac{x_{d^{-2} T+\hat{c}_2 \tau T^{2/3}}(T)-((1-2d^{-1})T- d \hat{c}_2 \tau T^{2/3})}{-c_1 T^{1/3}}.
	\end{equation}
	The following arguments are similar to those in the proof of Theorem~2.5 of \cite{FG24}. By assumption, we have $\tau^2+y(-\tau) \geq (1-2(1+\sqrt{\alpha}d)^{-2})\tau^2 \geq \tfrac{1}{2} \tau^2$. Thus, Lemma~6.6 of \cite{FG24} and Lemma~\ref{lem:weak_conv} imply, together with Proposition~4.4 of \cite{CH11}, see also Proposition~2.13(b) of \cite{CLW16}, that 
	\begin{equation}
	\begin{aligned}
	& \lim_{T \to \infty} \mathbb{P}(x_{\alpha T-j}(T) > (1-d^{-1}-d\alpha)T-ST^{1/3}-x_{1+j}^f \ \forall j \in [\alpha T-1]) \\
	&= \lim_{\varkappa \to \infty} \lim_{T \to \infty} \mathbb{P} \left( \sup_{\tau \in [-\hat{c}_2^{-1} \varkappa, \hat{c}_2^{-1} \varkappa]} \{ X_T(\tau) - y_T(-\tau) \} < c_1^{-1} S \right) \\ 
	&= \mathbb{P} \left( \sup_{\tau \in \mathbb{R}} \{ \mathcal{A}_2(\tau) - \tau^2 - y(-\tau) \} < c_1^{-1} S \right). 
	\end{aligned} 
	\end{equation}
	We obtain \eqref{eq:prop_limit_thm_2.2_0}. Note that the choice $g_\alpha  = 1-d^{-1}-d\alpha$ was correct because it produces a non-trivial limiting distribution. 
	
	Next, suppose Assumption~\ref{assumpt:2.7} is fulfilled and set $g_\alpha = 1-2\sqrt{\alpha}$. Then, \eqref{eq:prop_limit_thm_2.2_1} and \eqref{eq:prop_limit_thm_2.2_2} follow by the same arguments as those in the proof of Theorem~\ref{th:2.3}, only that now, there is no contribution of a wall influence. This means that we use the weak convergence in Lemma~\ref{lem:weak_conv} and do not consider space-like paths as before. 
	Omitting the contribution of the tagged particle process in this way, we obtain \eqref{eq:prop_limit_thm_2.2_1} by the arguments in the second part of the proof of Theorem~\ref{th:2.3} for $d=\tfrac{1}{\sqrt{\alpha}}$. 
	For \eqref{eq:prop_limit_thm_2.2_2}, we argue that the limit distribution equals 
	\begin{equation}
	\lim_{T \to \infty} \mathbb{P}(x_{\alpha T-j}(T) > (1-2\sqrt{\alpha})T-S T^{1/3}-x_{1+j}^f \ \forall j \in [T^{1/3+\eps}]),
	\end{equation}
	for any fixed $\eps > 0$. We bound the term from above by the limit distribution of the $\alpha T$-th particle, which equals $F_{\textup{GUE}}(c_1^{-1}S)$. For the lower bound, we use 
	\begin{equation}
	\lim_{\kappa \to 0} \mathbb{P} \left( \sup_{\tau \in [-\kappa,0]} \{ \mathcal{A}_2(\tfrac{\tau}{\alpha})-\tfrac{1}{2}(\tfrac{\tau}{\alpha})^2 \} < c_1^{-1}S \right) = \mathbb{P}(\mathcal{A}_2(0) < c_1^{-1}S)=F_{\textup{GUE}}(c_1^{-1}S)
	\end{equation}
	like in the proof of Theorem~\ref{th:2.3}. 
\end{proof}

\begin{proof}[Proof of Corollary~\ref{cor:prop_2.2}]
	By \eqref{eq:lemma_2.1_sep98} and Lemma~\ref{lem:colour_position_symmetry_step}, we have 
	\begin{equation}
	\tilde{x}_{d^{-2}T}(T) \overset{(d)}{=} \min_{j \leq d^{-2}T-1} \{ x_{d^{-2}T-j}(T)+x_{1+j}^f \}.
	\end{equation}
	Analogously to the proof of Proposition~\ref{pro:limit_thm_2.2}, we find 
	\begin{equation}
	\begin{aligned}
	&\lim_{\varkappa \to \infty} \lim_{T \to \infty} \mathbb{P}(x_{d^{-2}T-j}(T) > (1-2d^{-1})T-ST^{1/3}-x_{1+j}^f \ \forall |j| \leq \varkappa T^{2/3} ) \\
	&= \mathbb{P} \left( \sup_{\tau \in \mathbb{R}} \{ \mathcal{A}_2(\tau) - \tau^2 - y(-\tau) \} < c_1^{-1} S \right).
	\end{aligned} 
	\end{equation}
	In order to obtain Corollary~\ref{cor:prop_2.2}, we show 
	\begin{equation}
	\lim_{\varkappa \to \infty} \lim_{T \to \infty} \mathbb{P}(x_{d^{-2}T-j}(T) > (1-2d^{-1})T-ST^{1/3}-x_{1+j}^f \ \forall j \leq d^{-2}T-1,|j| \geq \varkappa T^{2/3} )=1.
	\end{equation}
	Because for $-(1-d^{-2})T \leq j \leq d^{-2}T-1$, it holds 
	$-x_{1+j}^f \leq d j + \tfrac{1}{2} \tfrac{d^3}{(1+d)^2}j^2T^{-1}$ and 
	$(1-2\sqrt{d^{-2}-jT^{-1}})T \geq (1-2d^{-1})T+dj+\tfrac{d^3}{(1+d)^2}j^2T^{-1}$, we obtain 
	\begin{equation}
	\lim_{\varkappa \to \infty} \lim_{T \to \infty} \mathbb{P}(\exists j \in I_\varkappa:  x_{d^{-2}T-j}(T) \leq (1-2d^{-1})T-ST^{1/3}-x_{1+j}^f ) = 0
	\end{equation}
	by similar means as in the proof of Lemma~\ref{lem:6.3}, where $I_\varkappa = \{ -(1-d^{-2}-\eta)T, \dots, d^{-2} T-1 \} \setminus \{-\varkappa T^{2/3}, \dots, \varkappa T^{2/3} \}$ and $\eta \in (0,1-d^{-2})$. Further, for $\tilde{I}_\eta = \{ -(1-d^{-2})T , \dots,  -(1-d^{-2}-\eta)T\}$, we can bound
	\begin{equation}
	\begin{aligned} 
	& \mathbb{P}\bigl(\exists j \in \tilde{I}_\eta:  x_{d^{-2}T-j}(T) \leq (1-2d^{-1})T-ST^{1/3}-x_{1+j}^f \bigr) \\
	& \leq \mathbb{P}(x_T(T) \leq (1-2\sqrt{1-\eta})T-ST^{1/3}-\tfrac{d^3}{2 (1+d)^2}(1-d^{-2}-\eta)^2T)
	\end{aligned} 
	\end{equation}
	and choose $\eta $ small enough such that the right hand side in the second probability is smaller than $ -T$. Then, the probability equals zero. For $j \leq -(1-d^{-2})T$, we have 
	$x_{d^{-2}T-j}(T) \geq -d^{-2}T+j$ and 
	\begin{equation} \begin{aligned} 
	(1-2d^{-1})T-x_{1+j}^f
	& \leq  (1-2d^{-1})T-x_{1-(1-d^{-2})T}^f+j+(1-d^{-2})T  \\
	& \leq (2-d^{-2}-d^{-1}-d + \tfrac{1}{2} \tfrac{d^3}{(1+d)^2}(1-d^{-2})^2)T +j \\
	& < -d^{-2}T+j
	\end{aligned} \end{equation}
	since $d > 1$. This implies 
	\begin{equation}
	\mathbb{P}(\exists j \leq -(1-d^{-2})T :  x_{d^{-2}T-j}(T) \leq (1-2d^{-1})T-ST^{1/3}-x_{1+j}^f )=0
	\end{equation}
	and concludes the proof of Corollary~\ref{cor:prop_2.2}.
\end{proof}

\appendix 
\section{One-point estimates and some computations} \label{appendix}

Before establishing the comparisons to a stationary TASEP that are used in the proofs of Lemma~\ref{lem:weak_conv} and Lemma~\ref{lem:6.3}, we recall the exponential bounds on the one-point distributions of a TASEP with step initial condition. 

\begin{lem}[Lemma A.2 of~\cite{BBF21}] \label{Lemma_A.2}
	Let $(x(t),t\geq 0)$ be a TASEP with step initial condition and let $\alpha \in (0,1)$. Then, it holds
	\begin{equation}
	\lim_{T \to \infty} \mathbb{P}( x_{\alpha T}(T) \geq (1-2 \sqrt{\alpha})T - s c_1(\alpha) T^{1/3}) = F_{\textup{GUE}}(s), \label{one_point_limit_GUE}
	\end{equation} where $c_1(\alpha) = \tfrac{(1-\sqrt{\alpha})^{2/3}}{\alpha^{1/6}}$.
	
	In addition, we have the following estimates on the lower and the upper tail:
	uniformly for all large times $T$ and for $\alpha$ in a closed subset of $(0,1)$, there exist constants $C,c > 0$ such that
	\begin{equation}
	\mathbb{P}( x_{\alpha T}(T) \leq (1-2\sqrt{\alpha})T - s c_1(\alpha) T^{1/3}) \leq C e^{-c s} \ \ \textrm{for} \ s > 0 \label{eq:firsttail}
	\end{equation}
	and
	\begin{equation}
	\mathbb{P}(x_{\alpha T}(T) \geq (1-2 \sqrt{\alpha})T + s c_1(\alpha) T^{1/3}) \leq C e^{-c s^{3/2}} \ \ \textrm{for} \ 0 < s = o(T^{2/3}). \label{eq:secondtail}
	\end{equation}
\end{lem}

\begin{lem} \label{lem:A.1} As claimed in the proof of Lemma~\ref{lem:6.3}, 
	\begin{equation} \begin{aligned} 
	\mathbb{P}\left(\exists k \in I_\varkappa, |k| \leq \varkappa T^{2/3+\iota}: \ x_{d^{-2}T-k}(T) \leq \left(1-2\sqrt{d^{-2}-kT^{-1}}\right)T -\tfrac{d^3 k^2}{4(1+\sqrt{\alpha}d)^2} T^{-1}\right)
	\end{aligned} \end{equation} 
	can be bounded by $C e^{-c\varkappa}$ for all $T$ and $\varkappa$ large enough. 
\end{lem}
\begin{proof}
	We first bound 
	\begin{equation} \label{eq:pf_lem_A.1_0}
	\mathbb{P} \left( \exists \tau \in [c_l \varkappa, c_l \varkappa T^\iota]: \ x_{d^{-2}T-\tau T^{2/3}}(T) \leq \left(1-2\sqrt{d^{-2}-\tau T^{-1/3}}\right)T-\tfrac{d^3 \tau^2}{4(1+\sqrt{\alpha}d)^2}  T^{1/3}\right) 
	\end{equation}
	by the sum over 
	\begin{equation} \label{eq:pf_lem_A.1_0.5}
	\begin{aligned}
	\mathbb{P} \left ( \min_{\tau \in [\ell \varkappa, (\ell+1)\varkappa]} \left \{ x_{d^{-2}T-\tau T^{2/3}}(T) - \left(1-2\sqrt{d^{-2}-\tau T^{-1/3}}\right)T \right\} \leq -\tfrac{d^3  \ell^2 \varkappa^2}{4(1+\sqrt{\alpha}d)^2} T^{1/3}\right )
	\end{aligned} 
	\end{equation}
	for $\ell \in \{c_l, \dots, c_l T^\iota -1\}$ and a constant $c_l >0$ to be chosen later. 
	The summands \eqref{eq:pf_lem_A.1_0.5} are bounded from above by 
	\begin{equation}
	\begin{aligned}
	&\mathbb{P}\Bigl(x_{d^{-2}T-(\ell+1)\varkappa T^{2/3}}(T) \leq \Bigl(1-2\sqrt{d^{-2}-(\ell+1)\varkappa T^{-1/3}}\Bigr)T - \tfrac{d^3 \ell^2 \varkappa^2 }{8(1+\sqrt{\alpha}d)^2} T^{1/3}\Bigr) \\
	&+ \mathbb{P} \Bigl ( \min_{\tau \in [\ell \varkappa, (\ell+1)\varkappa]} \Bigl\{ x_{d^{-2}T-\tau T^{2/3}}(T) - x_{d^{-2}T-(\ell+1)\varkappa T^{2/3}}(T) \\ 
	&\hphantom{+ \mathbb{P} \Bigl ( } + 2 \Bigl(\sqrt{d^{-2}-\tau T^{-1/3} \vphantom{(}} - \sqrt{d^{-2}-(\ell+1)\varkappa T^{-1/3}} \Bigr)T\Bigr \} \leq - \tfrac{d^3 \ell^2 \varkappa^2}{8(1+\sqrt{\alpha}d)^2}  T^{1/3} \Bigr ). 
	\end{aligned} 
	\end{equation}
	By one-point estimates, the first probability is $\leq C e^{-c \ell^2 \varkappa^2}$ with uniform constants $C,c>0$ for all $T$ large enough. 
	We bound the second probability by a comparison to a stationary TASEP. In doing so, we set $N = d^{-2}T-\ell \varkappa T^{2/3}$, $P = d^{-2}T-(\ell+1)\varkappa T^{2/3}$, $N_\tau = d^{-2} T - \tau T^{2/3}$, $\rho_0 = \sqrt{d^{-2}-\ell \varkappa T^{-1/3}}$, $\rho_- = \rho_0 - \kappa T^{-1/3}$ and $P_- = \rho_-^2 T + \tfrac{3}{2} \kappa \rho_- T^{2/3}$. Then, it holds $P_- = d^{-2} T - \ell \varkappa T^{2/3} - \tfrac{1}{2d} \kappa T^{2/3} + \mathcal{O}(\kappa \ell \varkappa T^{1/3}, \kappa^2 T^{1/3})$. We choose $\kappa =\ell \varkappa$ and $c_l \geq 2d $ such that this choice leads to $P_- < P$. Proposition~3.16 of \cite{FG24} yields 
	\begin{equation}
	\mathbb{P} (\forall \tau \in [\ell \varkappa, (\ell+1)\varkappa] \exists t \in [0,T]: N_\tau(T \downarrow t) = P(T \downarrow t)) \geq 1 - C e^{-c \kappa},
	\end{equation}
	where the backwards indices $N_\tau(T \downarrow t)$ are constructed with respect to $(x(t),t\geq 0)$, the backwards indices $P(T \downarrow t)$ are constructed with respect to a stationary TASEP $(x^{\rho}(t),t \geq 0)$ with $\rho = \rho_-$ and the processes are coupled by clock coupling, see \cite{FG24}. 
	By Lemma~3.14 of \cite{FG24}, we have 
	\begin{equation}
	\forall \tau \in [\ell\varkappa, (\ell+1)\varkappa]: x_{N_\tau}(T) - x_P(T) \geq x^\rho_{N_\tau}(T) - x^\rho_P(T)
	\end{equation}
	with probability $\geq 1-Ce^{-c \kappa}$. Notice that $\sum_{\ell=c_l}^{c_l T^\iota-1} C e^{-c\kappa} \leq C e^{-c \varkappa}$. 
	We write $x^{\rho}_{N_\tau}(T) - x^\rho_{P}(T) = - \sum_{j=1}^{((\ell+1)\varkappa-\tau)T^{2/3}} (1+Z_j)$, where $Z_j$ are independent random variables with $\mathbb{P}(Z_j = i) = \rho(1-\rho)^i, i \geq 0$. Further, 
	$ 2 (\sqrt{d^{-2}-\tau T^{-1/3}} - \sqrt{d^{-2}-(\ell+1)\varkappa T^{-1/3}} )T \geq d((\ell+1)\varkappa-\tau)T^{2/3}$ implies that it suffices to bound 
	\begin{equation}
	\mathbb{P} \Bigl ( \min_{\tau \in [\ell\varkappa,(\ell+1)\varkappa]} \Bigl \{ - \sum\nolimits_{j=1}^{((\ell+1)\varkappa-\tau)T^{2/3}} (1+Z_j-d) \Bigr \} \leq  - \tfrac{d^3}{8(1+\sqrt{\alpha}d)^2} \ell^2 \varkappa^2 T^{1/3} \Bigr ). 
	\end{equation}
	In addition, it holds $\mathbb{E}[Z_j] = \rho^{-1}-1 = d-1+(\tfrac{1}{2}d^3+d^2)\ell \varkappa T^{-1/3} + \mathcal{O}(\ell^2 \varkappa^2 T^{-2/3})$, so we can consider 
	\begin{equation} \begin{aligned} 
	& \min_{\tau \in [\ell\varkappa,(\ell+1)\varkappa]} \Bigl \{ - \sum\nolimits_{j=1}^{((\ell+1)\varkappa-\tau)T^{2/3}} (Z_j - \mathbb{E}[Z_j])            - (\tfrac{1}{2}d^3+d^2)\ell\varkappa((\ell+1)\varkappa-\tau)T^{1/3}       \Bigr  \}  \\
	&\geq \min_{\tau \in [\ell\varkappa,(\ell+1)\varkappa]} \Bigl \{ - \sum\nolimits_{j=1}^{((\ell+1)\varkappa-\tau)T^{2/3}} (Z_j - \mathbb{E}[Z_j]) \Bigr \} 
	- (\tfrac{1}{2} d^3+d^2) \ell \varkappa^2 T^{1/3} .
	\end{aligned} \end{equation}
	We choose $c_l$ large (but constant) such that $\ell$ fulfils 
	$(\tfrac{1}{2} d^3+d^2) \ell - \tfrac{d^3}{8(1+\sqrt{\alpha}d)^2}\ell^2 < - \delta \ell^2$ for some fixed $\delta > 0$. Then, we bound 
	\begin{equation}
	\mathbb{P} \Bigl ( \max \sum (Z_j - \mathbb{E}[Z_j]) \geq \delta \ell^2 \varkappa^2 T^{1/3} \Bigr ) \leq \inf_{\lambda > 0} \mathbb{E}[e^{\lambda(Z_1-\mathbb{E}[Z_1])}]^{\varkappa T^{2/3}} e^{-\lambda \delta \ell^2 \varkappa^2 T^{1/3}}
	\end{equation}
	by Doob's submartingale inequality. Computing the moment generating function of $Z_j$ and the expansion of $\mathbb{E}[e^{\lambda(Z_j-\mathbb{E}[Z_j])}]$ in $\lambda = 0$ and choosing $\lambda = T^{-1/3}$, we obtain a bound $ \leq C e^{-c\ell^2 \varkappa^2}$. Summing over $\ell$, we conclude $\eqref{eq:pf_lem_A.1_0} \leq C e^{-c\varkappa}$. Lastly, we replace $\varkappa$ by $c_l^{-1} \varkappa$ to obtain the bound for $k \in [\varkappa T^{2/3}, \varkappa T^{2/3+\iota}]$. 
	
	The arguments for the bound on  
	\begin{equation} \label{eq:pf_lem_A.1_1}
	\mathbb{P} \left( \exists \tau \in [ \varkappa, \varkappa T^\iota]: \ x_{d^{-2}T+\tau T^{2/3}}(T) \leq \left(1-2\sqrt{d^{-2}+\tau T^{-1/3}}\right)T-\tfrac{d^3 \tau^2}{4(1+\sqrt{\alpha}d)^2}  T^{1/3}\right) 
	\end{equation}
	are analogous. 
\end{proof}

\begin{lem}  \label{lem:appendix_modulus_continuity}
	In the proof of Lemma~\ref{lem:weak_conv}, we find some $\delta > 0$ such that 
	\begin{equation} \mathbb{P}(w_T(\delta) > \eps) < \eta \end{equation}
	for all $T$ large enough.
\end{lem}

\begin{proof}
	We set $N_\tau = \alpha T + \hat{c}_2 \tau T^{2/3}$ for $\tau \in [0,\varkappa]$ and define 
	$\rho_- = \sqrt{\alpha + \hat{c}_2 \varkappa T^{-1/3}}-\kappa T^{-1/3}$, $\rho_+ = \sqrt{\alpha}+\kappa T^{-1/3}$, $P = \rho_-^2 T + \tfrac{3}{2} \kappa \rho_- T^{2/3}$, $M = \rho_+^2 T - \tfrac{3}{2} \kappa \rho_+ T^{2/3}$ and $\kappa = \delta^{-2/3} \varkappa$. Then, for small $\delta$, we find $P < \alpha T $ and $M > \alpha T + \hat{c}_2 \varkappa T^{2/3}$. Applying Proposition~3.16 of \cite{FG24} for $P ,N= \alpha T+\hat{c}_2 \varkappa T^{2/3}$ and $N=\alpha T, M$, and using Lemma~3.14 of \cite{FG24}, we find 
	\begin{equation}
	\forall \tau_2 < \tau_1 \in [0,\varkappa]: \ x^{\rho_+}_{N_{\tau_2}}(T)-x^{\rho_+}_{N_{\tau_1}}(T) \leq x_{N_{\tau_2}}(T)-x_{N_{\tau_1}}(T) \leq x^{\rho_-}_{N_{\tau_2}}(T)-x^{\rho_-}_{N_{\tau_1}}(T)
	\end{equation}
	with probability $\geq 1-C e^{-c \kappa}$. This implies 
	\begin{equation}
	\begin{aligned}
	& \mathbb{P}(w_T(\delta) > \eps) \\ 
	& \leq C e^{-c\kappa} + \mathbb{P} \Bigl ( \sup_{\tau_2 < \tau_1 \in [0,\varkappa], |\tau_2-\tau_1| \leq \delta} \{ x^{\rho_-}_{N_{\tau_2}}(T)-x^{\rho_-}_{N_{\tau_1}}(T)-\tfrac{1}{\sqrt{\alpha}}\hat{c}_2 (\tau_1-\tau_2) T^{2/3}\} > \eps c_1 T^{1/3} \Bigr ) \\
	& \hphantom{\leq C e^{-c\kappa}} \ + \mathbb{P} \Bigl ( \sup_{\tau_2 < \tau_1 \in [0,\varkappa], |\tau_2-\tau_1| \leq \delta} \{ x^{\rho_+}_{N_{\tau_1}}(T)-x^{\rho_+}_{N_{\tau_2}}(T)+\tfrac{1}{\sqrt{\alpha}}\hat{c}_2 (\tau_1-\tau_2) T^{2/3}\} > \eps c_1 T^{1/3} \Bigr ).
	\end{aligned}
	\end{equation}
	We split $[0,\varkappa]$ up into subintervals of length $\delta$ and, using translation invariance of the stationary TASEP, observe 
	\begin{equation}
	\begin{aligned}
	& \mathbb{P} \Bigl ( \sup_{\tau_2 < \tau_1 \in [0,\varkappa], |\tau_2-\tau_1| \leq \delta} \{ x^{\rho_-}_{N_{\tau_2}}(T)-x^{\rho_-}_{N_{\tau_1}}(T)-\tfrac{1}{\sqrt{\alpha}}\hat{c}_2 (\tau_1-\tau_2) T^{2/3}\} > \eps c_1 T^{1/3} \Bigr ) \\
	& + \mathbb{P} \Bigl ( \sup_{\tau_2 < \tau_1 \in [0,\varkappa], |\tau_2-\tau_1| \leq \delta} \{ x^{\rho_+}_{N_{\tau_1}}(T)-x^{\rho_+}_{N_{\tau_2}}(T)+\tfrac{1}{\sqrt{\alpha}}\hat{c}_2 (\tau_1-\tau_2) T^{2/3}\} > \eps c_1 T^{1/3} \Bigr ) \\
	& \leq \frac{\varkappa}{\delta} \mathbb{P} \Bigl ( \sup_{\tau \in [0,\delta]} \{ x^{\rho_-}_{N_{0}}(T)-x^{\rho_-}_{N_{\tau}}(T)-\tfrac{1}{\sqrt{\alpha}}\hat{c}_2 \tau T^{2/3}\} > \tfrac{\eps}{2} c_1 T^{1/3} \Bigr ) \\
	& \hphantom{\leq} \ + \frac{\varkappa}{\delta} \mathbb{P} \Bigl ( \sup_{\tau \in [0,\delta]} \{ x^{\rho_+}_{N_{\tau}}(T)-x^{\rho_+}_{N_{0}}(T)+\tfrac{1}{\sqrt{\alpha}}\hat{c}_2 \tau T^{2/3}\} > \tfrac{\eps}{2} c_1 T^{1/3} \Bigr ).
	\end{aligned}
	\end{equation}
	Next, we write $ x^{\rho_-}_{N_{0}}(T)-x^{\rho_-}_{N_{\tau}}(T) = \sum_{j=1}^{\hat{c}_2\tau T^{2/3}} (1+Z^-_j)$ with $Z^-_j \sim \text{Geom}(\rho_-)$ independent and 
	$x^{\rho_+}_{N_{\tau}}(T)-x^{\rho_+}_{N_{0}}(T) = - \sum_{j=1}^{\hat{c}_2\tau T^{2/3}} (1+Z^+_j)$ with $Z^+_j \sim \text{Geom}(\rho_+)$ independent.
	It holds 
	\begin{equation} \begin{aligned} 
	& \mathbb{E}[Z_j^-] = \rho_-^{-1}-1=\tfrac{1}{\sqrt{\alpha}}-1-\tfrac{1}{2\alpha^{3/2}}\hat{c}_2 \varkappa T^{-1/3}+\tfrac{1}{\alpha} \kappa T^{-1/3} + \mathcal{O}(\delta^{-4/3} \varkappa^2 T^{-2/3}), \\
	& \mathbb{E}[Z_j^+] = \rho_+^{-1}-1=\tfrac{1}{\sqrt{\alpha}}-1 - \tfrac{1}{\alpha} \kappa T^{-1/3} + \mathcal{O}(\delta^{-4/3} \varkappa^2 T^{-2/3}).
	\end{aligned} \end{equation} By this means, we bound the sum above by 
	\begin{equation}
	\begin{aligned}
	& \frac{\varkappa}{\delta} \mathbb{P} \Bigl ( \sup_{\tau \in [0,\delta]} \Bigl\{  \sum\nolimits_{j=1}^{\hat{c}_2\tau T^{2/3}} (Z_j^- - \mathbb{E}[Z_j^-])\Bigr\} > \tfrac{\eps}{2} c_1 T^{1/3}  - \hat{c}_2 \tfrac{1}{\alpha} \kappa \delta T^{1/3} \Bigr ) \\
	&  + \frac{\varkappa}{\delta} \mathbb{P} \Bigl ( \sup_{\tau \in [0,\delta]} \Bigl\{ \sum\nolimits_{j=1}^{\hat{c}_2\tau T^{2/3}} (\mathbb{E}[Z_j^+] - Z_j^+)\Bigr\} > \tfrac{\eps}{2} c_1 T^{1/3} -  \hat{c}_2 \tfrac{1}{\alpha} \kappa \delta T^{1/3} \Bigr ).
	\end{aligned}
	\end{equation}
	Since $\kappa \delta = \delta^{1/3} \varkappa$, we can choose $\delta$ small enough such that the right hand sides are larger than $c \eps T^{1/3}$ for some constant $c > 0$. By Doob's submartingale inequality, we obtain the bound 
	\begin{equation}
	\begin{aligned}
	\frac{\varkappa}{\delta} \inf_{\lambda > 0} \mathbb{E}[e^{\lambda(Z_1^- - \mathbb{E}[Z_1^-])}]^{\hat{c}_2 \delta T^{2/3}} e^{-c \lambda \eps T^{1/3}} + \frac{\varkappa}{\delta} \inf_{\lambda > 0} \mathbb{E}[e^{\lambda(\mathbb{E}[Z_1^+] - Z_1^+)}]^{\hat{c}_2 \delta T^{2/3}} e^{-c \lambda \eps T^{1/3}}.
	\end{aligned}
	\end{equation}
	By series expansion around $\lambda = 0$, the moment generating functions are of the form $\text{exp}({\tfrac{1-\rho}{2\rho^2}\lambda^2+\mathcal{O}(\lambda^3)})$ for $\rho = \rho_\pm$. Choosing $\lambda = \delta^{-1/2} T^{-1/3}$, the bound becomes
	\begin{equation}
	\frac{\varkappa}{\delta} C e^{-c \delta^{-1/2}\eps}.
	\end{equation}
	Thus, we have shown 
	\begin{equation}
	\mathbb{P}(w_T(\delta) > \eps ) < C e^{-c \delta^{-2/3} \varkappa} + \frac{\varkappa}{\delta} C e^{-c \delta^{-1/2}\eps}
	\end{equation}
	for all $T$ large enough. Choosing $\delta$ small enough, the right hand side becomes smaller than $\eta$.
	We wish to point out that our choices of $\lambda$ and $\kappa$ do not lead to an optimal bound, but to one that is enough for our purpose.  
\end{proof}

\end{document}